\newcommand{\e}{\mathbb{E}}
\newcommand{\p}{\mathbb{P}}
\newcommand{\zn}{\mathbb{Z}^N}
\newcommand{\po}{(S_+^n)^N}
\newcommand{\poc}{(S_+^n)^N_c}
\newcommand{\xx}{X=(X_t)_{t\in\mathbb{Z}^N}}
\newcommand{\yli}{\overset{\text{law}}{=}}
\newtheorem{defi}{Definition}[section]
\newtheorem{lemma}[defi]{Lemma}
\newtheorem{theorem}[defi]{Theorem}
\newtheorem{kor}[defi]{Corollary}
\newtheorem{rem}[defi]{Remark}
\newtheorem{ex}[defi]{Example}
\begin{document}
\title{One-to-one correspondences between discrete multivariate stationary, self-similar and stationary increment fields}

\renewcommand{\thefootnote}{\fnsymbol{footnote}}

\author{Marko Voutilainen\footnotemark[1]}

\footnotetext[1]{University of Turku, Department of Accounting and Finance, FI-20014 University of Turku, Finland. mtvout@utu.fi}

\maketitle

\begin{abstract}
In this article, we consider three important classes of $n$-variate fields indexed by the set of $N$ dimensional integers, namely stationary, stationary increment and self-similar fields. These classes are connected through bijective transformations. In addition, we introduce generalized AR$(1)$ type equations, whose unique stationary solutions are obtained via these transformations. Lastly, we apply the transformations in order to construct stationary fractional Ornstein-Uhlenbeck fields of the first and second kind.
\end{abstract}
{\small
\medskip

\noindent
\textbf{AMS 2010 Mathematics Subject Classification:} 60G60, 60G10, 60G18
\medskip

\noindent
\textbf{Keywords:} discrete multivariate random fields, stationary fields, self-similar fields, stationary increment fields, Lamperti transformation, fractional Ornstein-Uhlenbeck fields, generalized AR(1) equation
}

\section{Introduction}

Stationary, self-similar and stationary increment processes form undoubtedly some of the most central classes of stochastic processes. Our focus is on processes that are indexed by the set of $N$ dimensional integers and taking values in the $n$ dimensional real space. Henceforth, we call this type of objects multivariate fields, whereas by (univariate) stochastic process we refer to collections of random variables indexed by the one dimensional set $T \in \{\mathbb{R}, \mathbb{Z}\}$, although the wider meaning of the term allows vast variation of state and parameter spaces. The cases $T = \mathbb{R}$ and $T = \mathbb{Z}$ (or more generally $T = \mathbb{R}^N$ and $T = \mathbb{Z}^N$) are called continuous and discrete, respectively. For some of the numerous potential real-world applications of the random fields that are of interest in this article, see e.g. \cite{makogin2019gaussian}, \cite{bierme2007operator} and \cite{terdik2005notes}, and references therein.

The considered type of stationarity is the strict one, meaning that the multidimensional distributions are invariant under uniform translations in the parameter space. In order to discuss stationarity of increments in the context of fields, we first need a notion for multidimensional increments. We adapt the definition applied e.g. in \cite{makogin2019gaussian}, where also different forms of stationarity of increments was investigated. Moreover, self-similarity refers to invariance of multidimensional distributions under appropriate scalings in the parameter and state spaces. The conventional notion of self-similarity of univariate processes have been extended to more generel settings in various ways. In the case of continuous time multivariate processes, different types of operator self-similarity have been considered e.g. in \cite{laha1981operator}, \cite{hudson1982operator} and \cite{sato1991self}. In \cite{genton2007self} the authors formalized the concept of multi-self-similarity for anisotropic univariate fields with the parameter space $\mathbb{R}^N$. Another approach towards anisotropy called operator-scaling random fields have been investigated e.g. in \cite{bierme2007operator}, \cite{clausel2010gaussian} and \cite{li2011multivariate}. See also \cite{kolodynski2003group}, where the notion of $G$-self-similarity allows a large spectrum of transformations of the parameter and state spaces. For an overview of univariate self-similar processes, we refer to \cite{embrechts2002selfsimilar}.

It was already shown by Lamperti in \cite{lamperti1962semi} that there exists a one-to-one correspondence between stationary and self-similar processes, which nowadays is known as Lamperti transformation. For an overview of Lamperti transformation, its applications and some of its variations, see \cite{flandrin2003stationarity}. In the case of continuous multi-self-similar univariate fields, a generalization of Lamperti transformation was introduced in \cite{genton2007self}, and in the case of discrete fields in \cite{voutilainen2023lamperti}. The two-parameter continuous case was discussed slightly earlier in \cite{terdik2005notes}. See also the recent paper \cite{davydov2019lamperti}, where Lamperti transformation was studied in a general setting related to the one in \cite{kolodynski2003group}.

It is acknowledged that Langevin (CAR$(1)$) equation can be regarded as the continuous time analogue of discrete time AR$(1)$ equation. In their classical forms, the noise can be derived from Brownian motion. One option to generalize these equations is to replace the Brownian driver with some other stationary increment noise. This approach is adapted e.g. in \cite{voutilainen2020modeling} and \cite{voutilainen2021vector}, where generalized AR$(1)$ and Langevin equations are studied for multivariate processes. The applied tools include a generalization of Lamperti transformation mapping from stationary to operator self-similar multivariate processes. Particularly, it was shown that essentially all stationary processes can be characterized as solutions to these equations. Similar results for univariate fields have been established in \cite{voutilainen2023lamperti}. 

We recall that fractional Brownian motion is self-similar and it has stationary increments being the unique centred Gaussian process possessing these properties. We immediately obtain two alternative ways to define a stationary process connected to fractional Brownian motion.
\begin{enumerate}[label=(\roman*)]
\item as the unique stationary solution to AR$(1)$ or Langevin equation driven by fBm.
\item via Lamperti transformation of fBm.
\end{enumerate}
The resulting stationary processes are called fractional Ornstein-Uhlenbeck processes of the first and second kind, respectively, and they have received a lot of attention in the literature. For details, see e.g. \cite{cheridito2003fractional}.

The remaining content of the article is organized as follows. Subsection \ref{sec:notation} introduces the applied notation. In Subsection \ref{sec:results}, we provide notions for multivariate stationary, stationary increment and self-similar fields, where the index $\Theta$ of self-similarity is a tuple of positive definite matrices. The given definition of self-similarity covers operator self-similarity of multivariate processes and multi-self-similarity of univariate fields as its special cases. A generalized Lamperti transformation $\mathcal{L}_\Theta$ gives a one-to-one correspondence between stationary fields and $\Theta$-self-similar fields. We construct transformations $\mathcal{M}_\Theta$ and $\mathcal{M}^{-1}_\Theta$ mapping from  $\Theta$-self-similar fields to stationary increment fields, and vice versa. The mapping is bijective restricted on a subset of stationary increment fields that includes e.g. multivariate fractional Brownian sheets. As a corollary, the composition $\mathcal{M}_\Theta\circ \mathcal{L}_\Theta$ gives a one-to-one correspondence between stationary fields and the class of stationary increment fields. 

Moreover, we introduce equations of AR$(1)$ type, where the noise is a general stationary increment field. We show that stationary fields can be characterized as solutions to these equations, and the connection between the stationary solution and the noise is given by the composition of the two previous transformations.

In Subsection \ref{sec:ou}, we illustrate how transformations $\mathcal{L}^{-1}_\Theta\circ\mathcal{M}^{-1}_\Theta$ and $\mathcal{L}_\Theta^{-1}$ can be utilized in construction of stationary fractional Ornstein-Uhlenbeck fields of the first and second kind.

The proofs are postponed to Section \ref{sec:proofs}.

\section{Results}
We begin with introducing the notation in Subsection \ref{sec:notation}. Subsection \ref{sec:results} consists of the essential definitions and the main results, which are applied in Subsection \ref{sec:ou} in order to formally define multivariate stationary fractional Ornstein-Uhlenbeck fields of the first and second kind.

\subsection{Notation}
\label{sec:notation}
The notation $\mathbb{N}$ stands for strictly positive integers. Let $X$ and $Y$ be random vectors of length $n$. Then the equality of the distributions is denoted by $X \yli Y$. Similarly, if $X  =(X_t)_{t\in\mathbb{Z}^N}$ and $Y = (Y_t)_{t\in\mathbb{Z}^N}$ are $n$-dimensional fields with equal finite dimensional distributions, we write $X \yli Y$. That is, for any finite collection $t_1,\dots,t_k\in\mathbb{Z}^N$ the joint distributions of $k$-tuples $\{X_{t_1}, \dots X_{t_k}\}$ and $\{Y_{t_1}, \dots Y_{t_k}\}$ of random vectors are equal. The $l^2$-vector norm and the corresponding induced matrix norm is denoted by $\Vert \cdot \Vert$. The set of symmetric $n\times n$-matrices is denoted by $S^n$. Moreover, the set of (symmetric) positive definite $n\times n$ matrices is denoted by $S_+^n$.

Occasionally, we also make use of the following notation. Let $u \subseteq \{1,\dots,N\}$ and $t\in\zn$. The cardinality of $u$ is denoted by $|u|$ and the complement of $u$ with respect to $\{1,\dots,N\}$ is denoted by $-u$. Moreover, the notation $t_u$ stands for picking the elements of $t$, whose indices belong to $u$ meaning that $t_u$ is a $|u|$-vector. For example, if $u = \{1,4,5\}$ and $t\in\mathbb{Z}^5$, then $t_u = (t_1, t_4,t_5)$. In particularly in Lemma \ref{lemma:increments} and Definition \ref{defi:bijection}, the notation is also applied in the context of multiple sums. If $j=(j_1,\dots,j_N)$ is a multi-index, then for example, we write
$$\sum_{j_u ={\bf 1}}^{t_u}$$
meaning that if $l\in u$, then $j_l$ runs from one to $t_l$ in the summation.\\

As a consequence of multidimensionality of the index and state spaces, we are working with parameters that are tuples of matrices.
\begin{defi}
By $ (S_+^n)^N$ we denote the set of $N$-tuples of positive definite $n\times n$-matrices. That is, $\Theta = (\Theta_1,\dots, \Theta_N) \in (S_+^n)^N$ if $\Theta_j\in S_+^n$ for all $j\in\{1,\dots,N\}$. We use similar notation also for $N$-tuples of different types of objects, e.g. $n$-vectors. Furthermore, $ (S_+^n)^N_c$ denotes the subset of  $(S_+^n)^N$ such that if $\Theta \in (S_+^n)^N_c$, then $\Theta$ has pairwisely commuting elements with respect to the standard matrix product.
\end{defi}
The sets $\po$ and $\poc$ are fundamental for us. For this reason, the most of definitions and results are stated under the assumption $\Theta\in\po$ (or $\Theta\in\poc$), although some of these could easily be formulated for larger classes of $N$-tuples of matrices.
Also the following bivariate operators acting on tuples of integers, vectors and matrices play an essential role in our work.

\begin{defi}
Let $t\in\zn$ and $\Theta\in (S^n)^N$. We define an operator $\ast: \zn\times (S^n)^N \to S^n$ by 
$$t\ast\Theta = \sum_{j=1}^N t_j\Theta_j.$$
In addition, if $X = (X_1, \dots, X_N)\in (\mathbb{R}^n)^N$ is a $N$-tuple of $n$-vectors, we define an operator $\star: (S^n)^N\times (\mathbb{R}^n)^N \to \mathbb{R}^N$ by
$$ \Theta\star X = \sum_{j=1}^N \Theta_j X_j. $$
\end{defi}

\begin{rem}
When $n=1$ the above definitions reduce to standard inner products of vectors: $t\ast\Theta = \langle t, \Theta \rangle$ and $\Theta \star X = \langle \Theta, X \rangle$.
\end{rem}

\subsection{Multivariate stationary, self-similar and stationary increments fields}
\label{sec:results}
We start by providing notions of stationarity and self-similarity in our setting. We consider multivariate fields that are stationary in the strict sense, and we refer to such fields simply as ''stationary fields'' in the sequal. The usual definition of stationary processes extends directly to multivariate fields.
\begin{defi}
A field $X = (X_t)_{t\in\mathbb{Z}^N} = (X_{t_1,\dots,t_N}^{(1)}, \dots, X_{t_1,\dots,t_N}^{(n)})^T_{t\in\mathbb{Z}^N}$ is stationary if
$$(X_{t+s})_{t\in\mathbb{Z}^N} \overset{\text{law}}{=} (X_t)_{t\in\mathbb{Z}^N}$$
for every $s\in\mathbb{Z}^N$.
\end{defi}
The notion of self-similarity is a more delicate matter. First of all, we need to ensure that our discrete parameter space is closed under appropriate scalings. This is done by choosing ''exponential clocks'' for self-similar fields. Secondly, we want to take account of fields that exhibit different types of self-similarity in different dimensions of the parameter and state spaces. Thus, the following definition combines the concepts of multi-self-similarity of univariate fields and operator self-similarity of multivariate processes.

\begin{defi}
\label{defi:selfsimilarity}
Let $\Theta\in (S_+^n)^N.$ A field $Y = (Y_{e^t})_{t\in\mathbb{Z}^N} = (Y^{(1)}_{e^{t_1}, \dots, e^{t_N}} , \dots, Y^{(n)}_{e^{t_1}, \dots, e^{t_N}})_{t\in\mathbb{Z}^N}$ is $\Theta$-self-similar if 
$$(Y_{e^{t+s}})_{t\in\mathbb{Z}^N} \overset{\text{law}}{=} (e^{ s\ast\Theta} Y_{e^t})_{t\in\mathbb{Z}^N}$$
for every  $s\in\mathbb{Z}^N$.
\end{defi}
Next, we discuss how Definition \ref{defi:selfsimilarity} is connected to definitions existing in the literature. Under Definition \ref{defi:selfsimilarity},
$$Y_{e^{t_1}, \dots, e^{t_j+s_j},\dots,e^{t_N}}\overset{\text{law}}{=} e^{s_j\Theta_j} Y_{e^{t_1},\dots, e^{t_N}}.$$ 
Let $Q_j\Lambda_jQ_j^T$ be an eigendecomposition of $\Theta_j$ with eigenvalues $\lambda_{j,k}$, then
$$e^{s_j\Theta_j} = Q_j \mathrm{diag}\left(e^{s_j \lambda_{j,k}} \right) Q_j^T \rightarrow 0$$
as $s_j\to -\infty$. Consequently, under the usual assumption of stochastic continuity, a self-similar field $Y_{r_1,\dots,r_N}$ that is defined also on the hyperplanes $r_j=0$ satisfies $Y_{r_1,\dots,r_N} = 0$ on these hyperplanes, as it is expected.

Let us consider Definition \ref{defi:selfsimilarity} with the index set $\mathbb{R}^N$. We note that if $\Theta\in \poc$, then change of variables $e^{{t_j}}=r_j$ and $e^{s_j} = a_j$ yield
\begin{equation}
\label{continuoustime}
\left(Y_{a_1r_1,\dots, a_Nr_N} \right)_{r\in(0,\infty)^N} \overset{\text{law}}{=}  a_1^{\Theta_1}\dots a_N^{\Theta_N} (Y_r)_{r\in (0,\infty)^N}.
\end{equation}
When $n=1$, this corresponds to the definition of multi-self-similarity of univariate fields in  \cite{genton2007self}, where self-similarity indices $\Theta_j$ are positive real numbers representing self-similarity in different dimensions of the parameter space.\\
For index set $\mathbb{R}$ ($N=1$), Definition \ref{defi:selfsimilarity} gives
$$(Y_{ar})_{r\in (0,\infty)} \overset{\text{law}}{=} a^\Theta (Y_r)_{r\in(0,\infty)},$$
which corresponds to a type of operator self-similarity of vector-valued processes. This extends in a natural way the conventional notion of self-similarity of scalar random processes, where $n=N=1$ and $\Theta$ is a positive real number. For notions of operator self-similarity, see \cite{sato1991self} and references therein. For an account on self-similarity in the basic setting, we refer to \cite{embrechts2002selfsimilar}.

A well-known example of a scalar-valued multi-self-similar field is the fractional Brownian sheet (fBs) $B^H = (B^H_t)_{t\in\mathbb{R}^N}$ with Hurst multi-index $H = (H_1, \dots, H_N)$, $0 < H \leq 1$, introduced in \cite{kamont1995fractional}. FBs is the centred Gaussian field with covariance
$$\e(B^H_t B^H_s) = \frac{1}{2^N} \prod_{j=1}^N \left(|t_j|^{2H_j} + |s_j|^{2H_j}- |t_j-s_j|^{2H_j} \right), $$
and it is multi-self-similar with $\Theta = H$. Note that $B^H_t=0$ (a.s) on the hyperplanes with $t_l=0$ for some $l = 1,\dots,N$.

Let $\Theta_j = \mathrm{diag}(\Theta_{j}^{(k)})$ with positive entries. Then from \eqref{continuoustime}, we get
$$\left(Y_{a_1r_1,\dots, a_Nr_N} \right)_{r\in(0,\infty)^N} \overset{\text{law}}{=} \mathrm{diag} \left( \prod_{j=1}^N a_j^{\Theta_{j}^{(k)}} \right)(Y_r)_{r\in (0,\infty)^N}.$$
That is, the $k$th component of $Y$ is multi-self-similar with $\Theta^{(k)} = (\Theta_{1}^{(k)}, \dots, \Theta_{N}^{(k)})$. When $0< \Theta_j^{(k)} \leq 1$, this is satisfied by $N$ independent fractional Brownian sheets with Hurst multi-indices $H^{(k)} = \Theta^{(k)}$. Furthermore, the discretized versions of the above-discussed fields obviously satisfy Definition \ref{defi:selfsimilarity}.\\

Similarly as the notion of self-similarity, the next introduced Lamperti transformation provides definitions acknowledged in the literature as its special cases.

\begin{defi}
Let $\Theta\in\po$, and $X=(X_t)_{t\in\zn}$ and $Y= (Y_{e^t})_{t\in\zn}$. We set
\begin{align*}
(\mathcal{L}_\Theta X)_{e^t} &= e^{ t\ast\Theta} X_t, \quad t\in\mathbb{Z}^N\\
(\mathcal{L}^{-1}_\Theta Y)_t &= e^{- t\ast\Theta} Y_{e^t}, \quad  t\in\mathbb{Z}^N.
\end{align*}
\end{defi}
The transformation $\mathcal{L}_\Theta$ gives a one-to-one connection between stationary fields and $\Theta$-self-similar fields.

\begin{theorem}
\label{theorem:lamperti}
Let $\Theta\in\poc$. If $X$ is stationary, then $\mathcal{L}_\Theta X$ is $\Theta$-self-similar. Conversely, if $Y$ is $\Theta$-self-similar, then $\mathcal{L}^{-1}_\Theta Y$ is stationary. Moreover, 
$$(\mathcal{L}^{-1}_\Theta \circ \mathcal{L}_\Theta)(X) = X\quad\text{and}\quad (\mathcal{L}_\Theta \circ \mathcal{L}^{-1}_\Theta)(Y)=Y$$
for all stationary $X$ and $\Theta$-self-similar $Y$.
\end{theorem}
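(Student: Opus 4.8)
The plan is to verify the two distributional identities by direct computation, carefully exploiting the commutativity assumption $\Theta\in\poc$, and then to check the inversion formulas, which are essentially algebraic.

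First I would prove that $\mathcal{L}_\Theta X$ is $\Theta$-self-similar whenever $X$ is stationary. Fix $s\in\zn$. By definition $(\mathcal{L}_\Theta X)_{e^{t+s}} = e^{(t+s)\ast\Theta} X_{t+s}$, and I want to compare this with $e^{s\ast\Theta}(\mathcal{L}_\Theta X)_{e^t} = e^{s\ast\Theta} e^{t\ast\Theta} X_t$. The key point is that $(t+s)\ast\Theta = t\ast\Theta + s\ast\Theta = \sum_j (t_j+s_j)\Theta_j$, and since the $\Theta_j$ commute pairwise, the matrices $t\ast\Theta$ and $s\ast\Theta$ commute, so $e^{(t+s)\ast\Theta} = e^{s\ast\Theta}e^{t\ast\Theta}$. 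Hence $(\mathcal{L}_\Theta X)_{e^{t+s}} = e^{s\ast\Theta}\,e^{t\ast\Theta} X_{t+s}$. Now stationarity of $X$ gives $(X_{t+s})_{t\in\zn} \yli (X_t)_{t\in\zn}$, and applying the (deterministic, $t$-dependent but $s$-independent) linear maps $e^{t\ast\Theta}$ coordinatewise preserves equality of finite-dimensional distributions; multiplying further by the fixed matrix $e^{s\ast\Theta}$ also preserves it. Therefore $(e^{s\ast\Theta} e^{t\ast\Theta} X_{t+s})_{t} \yli (e^{s\ast\Theta} e^{t\ast\Theta} X_t)_t = (e^{s\ast\Theta}(\mathcal{L}_\Theta X)_{e^t})_t$, which is exactly Definition \ref{defi:selfsimilarity}.

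The converse is entirely symmetric: given $\Theta$-self-similar $Y$, I would write $(\mathcal{L}^{-1}_\Theta Y)_{t+s} = e^{-(t+s)\ast\Theta} Y_{e^{t+s}}$, use commutativity to factor $e^{-(t+s)\ast\Theta} = e^{-t\ast\Theta} e^{-s\ast\Theta}$, invoke $(Y_{e^{t+s}})_t \yli (e^{s\ast\Theta} Y_{e^t})_t$, and note that $e^{-t\ast\Theta} e^{-s\ast\Theta} e^{s\ast\Theta} = e^{-t\ast\Theta}$, so the right-hand side becomes $(e^{-t\ast\Theta} Y_{e^t})_t = (\mathcal{L}^{-1}_\Theta Y)_t$; equality of finite-dimensional distributions is again preserved by applying the deterministic maps $e^{-t\ast\Theta}$. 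The inversion identities are then purely pointwise: $(\mathcal{L}^{-1}_\Theta\circ\mathcal{L}_\Theta X)_t = e^{-t\ast\Theta}(\mathcal{L}_\Theta X)_{e^t} = e^{-t\ast\Theta} e^{t\ast\Theta} X_t = X_t$, and likewise $(\mathcal{L}_\Theta\circ\mathcal{L}^{-1}_\Theta Y)_{e^t} = e^{t\ast\Theta} e^{-t\ast\Theta} Y_{e^t} = Y_{e^t}$; here no commutativity is even needed since the two exponents are negatives of the same matrix.

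The only genuine subtlety—the ``hard part,'' though it is mild—is making sure the commutativity hypothesis is used correctly: $e^{A+B}=e^{A}e^{B}$ for matrices requires $AB=BA$, and this is where $\Theta\in\poc$ rather than merely $\Theta\in\po$ is needed; without it the factorization $e^{(t+s)\ast\Theta} = e^{s\ast\Theta}e^{t\ast\Theta}$ fails and the self-similarity relation as stated would not follow. I would also remark explicitly that applying a fixed (or index-dependent but non-random) family of invertible linear maps to a field preserves equality of finite-dimensional distributions, since this is the one ``soft'' fact the argument leans on repeatedly. Everything else is bookkeeping.
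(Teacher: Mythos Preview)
Your proposal is correct and follows essentially the same route as the paper: both directions are handled by factoring $e^{(t+s)\ast\Theta}=e^{s\ast\Theta}e^{t\ast\Theta}$ via the commutativity hypothesis $\Theta\in\poc$, invoking stationarity or $\Theta$-self-similarity to pass to equality in law, and then reading off the inversion identities pointwise from the definition. Your write-up is in fact slightly more careful than the paper's in making explicit why deterministic linear maps preserve equality of finite-dimensional distributions and in isolating exactly where commutativity is used.
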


Next, we turn to stationary increment fields. To this end, we first provide the notions of rectangular and unit cube increments, the latter being a special case of the former. The names of the two types of increments refer to their geometry in the index space.
\begin{defi}
Let $t,s\in\mathbb{Z}^N$ and $X=(X_t)_{t\in\zn}$. The rectangular increment of $X$ corresponding to the hyperrectangle $[s,t]$ is
\begin{equation}
\label{eq:increments}
\Delta_{s,t} X = \sum_{(i_1,\dots,i_N)\in\{0,1\}^N} (-1)^{\sum_{l=1}^N i_l} X_{t_1-i_1(t_1-s_1),\dots,t_N-i_N(t_N-s_N)}= \sum_{i\in\{0,1\}^N}(-1)^{\sum i} X_{t-i(t-s)}.
\end{equation}
Similarly, for $Y=(Y_{e^t})_{t\in\zn}$, we define
$$\Delta_{s,t} Y = \sum_{(i_1,\dots,i_N)\in\{0,1\}^N} (-1)^{\sum_{l=1}^N i_l} Y_{e^{t_1-i_1(t_1-s_1)},\dots,e^{t_N-i_N(t_N-s_N)}}= \sum_{i\in\{0,1\}^N}(-1)^{\sum i} Y_{e^{t-i(t-s)}}.$$
\end{defi}

\begin{defi}
Let $t\in\mathbb{Z}^N$ and $X=(X_t)_{t\in\zn}$. The unit cube increment of $X$ at $t$ is given by
\begin{equation}
\label{eq:increments2}
\Delta_t X = \Delta_{t-1,t} X = \sum_{(i_1,\dots,i_N)\in\{0,1\}^N} (-1)^{\sum_{l=1}^N i_l} X_{t_1-i_1,\dots,t_N-i_N}= \sum_{i\in\{0,1\}^N}(-1)^{\sum i} X_{t-i}.
\end{equation}
Similarly, for $Y=(Y_{e^t})_{t\in\zn}$,
$$\Delta_t Y = \Delta_{t-1,t} Y = \sum_{i\in\{0,1\}^N}(-1)^{\sum i} Y_{e^{t-i}}.$$
\end{defi}

\begin{rem}
\label{rem:properties}
When $N=1$, the rectangular increment $\Delta_{s,t}X$ is just the usual one-dimensional increment $X_t-X_s$. Due to the alternating sign in \eqref{eq:increments}, degenerate increments ($t_l =s_l$ for some $l$) are equal to zero. E.g. in \cite{makogin2019gaussian} the authors considered rectangular increments with $s \leq t$ element-wise. However, it can be verified from \eqref{eq:increments} that the multidimensional increments satisfy yet another property that is analogous with the one-dimensional increments: Let $\tilde{s}$ and $\tilde{t}$ be the vectors obtained from $s$ and $t$ by swapping the elements for which $s_l > t_l$, and let $m$ be the number of such elements. Thus, now $\tilde{s} \leq \tilde{t}$ and in addition,
$$\Delta_{s,t}X = (-1)^m \Delta_{\tilde{s},\tilde{t}} X.$$
\end{rem}

\begin{ex}
In the two-parameter case, we obtain for example that
$$\Delta_{s,t} X = X_{t_1,t_2}- X_{s_1,t_2}- X_{t_1,s_2} + X_{s_1,s_2}$$
and
$$\Delta_t Y = Y_{e^{t_1}, e^{t_2}} - Y_{e^{t_1},e^{t_2-1}} - Y_{e^{t_1-1},e^{t_2}}+Y_{e^{t_1-1},e^{t_2-1}}.$$
\end{ex}

The following lemma shows how rectangular increments can be constructed from unit square increments.
\begin{lemma}
\label{lemma:incrementsum}
Let $t,s\in\zn$ with $s \leq t$ element-wise. Then 
$$\sum_{j_1=s_1+1}^{t_1}\dots \sum_{j_N = s_N+1}^{t_N} \Delta_j Z  \eqqcolon \sum_{j=s+{\bf 1}}^t \Delta_j Z = \Delta_{s,t} Z,$$
where $Z$ is a field indexed by $\zn$ and sums of the type $\sum_{j_k=t_k+1}^{t_k}$ are empty sums.
\end{lemma}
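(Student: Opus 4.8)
The plan is to induct on the dimension $N$ of the parameter space, using the elementary one-dimensional telescoping identity as the engine. The structural fact that makes the induction run is that the unit cube increment factors as a one-dimensional backward difference of lower-dimensional unit cube increments. Concretely, for a field $Z$ indexed by $\zn$ write $Z^{a}$ for the $(N-1)$-dimensional field obtained by fixing the last coordinate at $a$, i.e. $Z^{a}_{j'} = Z_{(j',a)}$ with $j'\in\mathbb{Z}^{N-1}$. Splitting the summation index $i\in\{0,1\}^N$ in \eqref{eq:increments2} as $i=(i',i_N)$ gives
$$\Delta_{(j',j_N)} Z = \Delta_{j'} Z^{j_N} - \Delta_{j'} Z^{j_N-1},$$
where $\Delta_{j'}$ on the right is the $(N-1)$-dimensional unit cube increment; the analogous splitting in \eqref{eq:increments} gives $\Delta_{s,t} Z = \Delta_{s',t'} Z^{t_N} - \Delta_{s',t'} Z^{s_N}$, where $s=(s',s_N)$ and $t=(t',t_N)$.

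First I would settle the base case $N=1$: there $\sum_{j=s+1}^{t}\Delta_j Z = \sum_{j=s+1}^{t}(Z_j-Z_{j-1}) = Z_t-Z_s = \Delta_{s,t}Z$ by telescoping (the case $s=t$ giving $0=0$). For the inductive step, assuming the identity in dimension $N-1$, I would reorder the finite iterated sum so that the summation over $j_N$ is outermost, apply the factorization of $\Delta_j Z$, and invoke the induction hypothesis on $Z^{j_N}$ and $Z^{j_N-1}$ for each fixed $j_N$:
$$\sum_{j=s+{\bf 1}}^{t}\Delta_j Z = \sum_{j_N=s_N+1}^{t_N}\Big(\sum_{j'=s'+{\bf 1}}^{t'}\Delta_{j'}Z^{j_N} - \sum_{j'=s'+{\bf 1}}^{t'}\Delta_{j'}Z^{j_N-1}\Big) = \sum_{j_N=s_N+1}^{t_N}\big(\Delta_{s',t'}Z^{j_N} - \Delta_{s',t'}Z^{j_N-1}\big).$$
The remaining sum telescopes to $\Delta_{s',t'}Z^{t_N} - \Delta_{s',t'}Z^{s_N}$, which is exactly $\Delta_{s,t}Z$ by the second identity above, closing the induction.

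The argument is essentially bookkeeping, and the only place needing genuine care is verifying the two factorization identities — in particular matching ``last coordinate $=t_N$'' with $i_N=0$ and ``last coordinate $=s_N$'' with $i_N=1$ in the defining alternating sums (and similarly $j_N$ versus $j_N-1$ for the unit cube increment), so that the signs $(-1)^{\sum i}$ come out right. Once this is in place the base case and the two telescopings are immediate, and the degenerate cases ($s_l=t_l$ for some $l$) need no separate treatment, since an empty sum on the left is matched by a vanishing degenerate increment on the right. An equivalent, more compact formulation avoids the induction altogether: writing $D_l$ for the backward difference operator in coordinate $l$, one has $\Delta_j Z=(D_1\cdots D_N Z)_j$ with the $D_l$ pairwise commuting, each iterated summation $\sum_{j_l=s_l+1}^{t_l}$ is a telescoping left inverse of $D_l$, and peeling the coordinates off one at a time produces $\Delta_{s,t}Z$ directly.
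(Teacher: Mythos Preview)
Your proof is correct, but it takes a genuinely different route from the paper. The paper proves the identity by a direct counting argument: it expands the left-hand side as a triple iterated sum over $j$ and $i$, fixes a term $Z_{k_1,\dots,k_N}$ with $s_l\le k_l\le t_l$, and counts how many $(j,i)$ contribute it. Using the alternating binomial identity $\sum_{m=0}^M(-1)^m\binom{M}{m}=0$ (stated separately as Lemma~\ref{binomial}), all terms cancel except those with every $k_l\in\{s_l,t_l\}$, and the surviving signs reproduce $\Delta_{s,t}Z$.

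Your induction-on-$N$ argument via the factorizations $\Delta_{(j',j_N)}Z=\Delta_{j'}Z^{j_N}-\Delta_{j'}Z^{j_N-1}$ and $\Delta_{s,t}Z=\Delta_{s',t'}Z^{t_N}-\Delta_{s',t'}Z^{s_N}$, followed by a one-dimensional telescoping in the last coordinate, is more structural: it makes transparent that $\Delta_j=D_1\cdots D_N$ is a product of commuting backward differences and that each iterated sum inverts one of them. It avoids the binomial lemma entirely and handles the degenerate case $s_l=t_l$ uniformly. The paper's approach, by contrast, is non-inductive and treats all coordinates symmetrically at once, at the cost of the combinatorial bookkeeping; that same counting technique is reused later in the paper (Lemmas~\ref{lemma:selfsimilar} and~\ref{lemma:increments}), which is presumably why the author set it up here.
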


By saying that $(X_t)_{t\in\zn}$ is a stationary increment field we simply mean that the unit cube increment field $(\Delta_t X)_{t\in\zn}$ is stationary.
\begin{defi}
\label{defi:stationaryincrements}
A field $X = (X_t)_{t\in\mathbb{Z}^N}$ is a stationary increment field if 
$$(\Delta_{t+s} X)_{t\in\mathbb{Z}^N} \overset{\text{law}}{=} (\Delta_t X)_{t\in\mathbb{Z}^N}$$
for every $s\in\mathbb{Z}^N$.
\end{defi}

\begin{rem}
In \cite{makogin2019gaussian} the authors consider several notions of stationarity of increments. In particularly, their notion of strictly stationary rectangular increments corresponds to invariance of joint probability distributions of finite collections of rectangular increments under uniform translations in the parameter space. In our setting, this can be formalized as
$$
\begin{pmatrix}
\Delta_{s^{(1)}+h, t^{(1)}+h} X\\
\vdots \\
\Delta_{s^{(m)}+h, t^{(m)}+h} X
\end{pmatrix} 
\overset{\text{law}}{=} 
\begin{pmatrix}
\Delta_{s^{(1)}, t^{(1)}} X\\
\vdots\\
\Delta_{s^{(m)}, t^{(m)}} X
\end{pmatrix},
$$
where $m\in\mathbb{N}$, $h\in\mathbb{Z}^N$ and $t^{(j)}, s^{(j)} \in\mathbb{Z}^N$ for all $j=1,\dots,m$.
However, since unit cubes provide basic building blocks for our discrete parameter space in the fashion of Lemma \ref{lemma:incrementsum}, the above is equivalent with Definition \ref{defi:stationaryincrements}.
\end{rem}
A (discrete) fractional Brownian sheet provides an example of a stationary increment field. In addition, unlike in the case of processes and fractional Brownian motion, fBs is not the only Gaussian self-similar field possessing stationary increments, as the example provided in \cite{makogin2015example} reveals.

Stationary increment fields satisfying a rather mild additional property form the third essential class of multivariate fields, besides of stationary fields and self-similar fields, considered in this article. We show that the existence of a certain logarithmic moment is a sufficient condition and hence, the class includes e.g. all Gaussian stationary increment fields.

\begin{defi}
\label{defi:class}
Let $\Theta\in \poc$ and let $G= (G_t)_{t\in\mathbb{Z}^N}$ be a stationary increment field. If
\begin{equation}
\label{eq:glimitcondition}
\lim_{M_1\to\infty} \dots \lim_{M_N\to\infty} \sum_{j_1=-M_1}^{t_1}\dots \sum_{j_N=-M_N}^{t_N}  e^{j\ast \Theta} \Delta_{j}G
\end{equation}
converges in probability for every $t\in\mathbb{Z}^N$, then we write $G\in\mathcal{G}_\Theta$. In this case, we denote the limit of \eqref{eq:glimitcondition} as
\begin{equation}
\label{limitnotation}
\sum_{j_1=-\infty}^{t_1}\dots \sum_{j_N=-\infty}^{t_N}  e^{j\ast \Theta} \Delta_{j}G =  \sum_{j=-\infty}^t e^{j\ast \Theta} \Delta_{j}G.
\end{equation}
Moreover, if $G_t = 0$ (a.s.) whenever $t_l=0$ for some $l\in\{1,\dots,N\}$, then we write $G\in\mathcal{G}_{\Theta,0} \subset \mathcal{G}_{\Theta}$.
\end{defi}

\begin{rem}
\label{rem:permutations}
It turns out that if \eqref{eq:glimitcondition} exists for any permutation of limits, then it exists for all permutation of limits and as a $N$-fold limit with a common limiting random vector (see also Corollary \ref{kor:permutations}). This justifies the abbreviated notation \eqref{limitnotation}. 
\end{rem}

\begin{lemma}
\label{lemma:logarithm}
Let $G = (G_t)_{t\in\mathbb{Z}^N}$ be a stationary increment field. Assume that
\begin{equation}
\label{logarithmic}
\e \left(\ln \Vert \Delta_{{\bf 1}} G \Vert \mathbbm{1}_{\{ \Vert \Delta_{{\bf 1}} G\Vert > 1 \}} \right)^{N+ \delta} < \infty
\end{equation}
for some $\delta >0$, where $\Delta_{{\bf 1}} = \Delta_{1,\dots,1}$. Then \eqref{eq:glimitcondition} converges almost surely for all $\Theta\in\poc$. In particularly, $G \in\mathcal{G}_{\Theta}$. 

Moreover, if $G_t = 0$ whenever $t_l=0$ for some $l\in\{1,\dots,N\}$, then $\Vert \Delta_{\bf 1} G \Vert$ can be replaced by $\Vert G_{{\bf 1}}\Vert = \Vert G_{1,\dots,1} \Vert$ in \eqref{logarithmic}. In this case, $G \in\mathcal{G}_{\Theta,0}$ for all $\Theta\in\poc$.
\end{lemma}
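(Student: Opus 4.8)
The plan is to prove the stronger statement that the multiple series of norms
$$S \coloneqq \sum_{j_1=-\infty}^{t_1}\cdots\sum_{j_N=-\infty}^{t_N}\bigl\Vert e^{j\ast\Theta}\bigr\Vert\,\bigl\Vert\Delta_j G\bigr\Vert$$
is finite almost surely for every $\Theta\in\poc$ and every $t\in\zn$. Once this is known, the signed series $\sum_j e^{j\ast\Theta}\Delta_j G$ converges absolutely, hence unconditionally, almost surely; consequently the iterated limit in \eqref{eq:glimitcondition} exists, equals this common sum and does not depend on the order of summation (which also explains Remark \ref{rem:permutations}), and in particular $G\in\mathcal{G}_\Theta$ since almost sure convergence implies convergence in probability. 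To control the matrix factor I would use that the elements of $\Theta$ commute and are symmetric positive definite, so $e^{j\ast\Theta}=\prod_{k=1}^{N}e^{j_k\Theta_k}$ with each $e^{j_k\Theta_k}$ symmetric; hence $\Vert e^{j_k\Theta_k}\Vert=e^{j_k\lambda_{\max}(\Theta_k)}$ for $j_k\ge 0$ and $\Vert e^{j_k\Theta_k}\Vert=e^{-|j_k|\lambda_{\min}(\Theta_k)}$ for $j_k\le 0$. Set $\lambda\coloneqq\min_{1\le k\le N}\lambda_{\min}(\Theta_k)>0$.

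Next I would split $\{j\in\zn:j\le t\}$ into the $2^N$ disjoint pieces $A_u$, $u\subseteq\{1,\dots,N\}$, where on $A_u$ one has $j_k\le -1$ for $k\in u$ and $0\le j_k\le t_k$ for $k\notin u$ (a piece being vacuous when $t_k<0$ for some $k\notin u$). On $A_u$ the above estimates and submultiplicativity of the operator norm give $\Vert e^{j\ast\Theta}\Vert\le C_u\exp\!\bigl(-\lambda\sum_{k\in u}|j_k|\bigr)$ with $C_u\coloneqq\prod_{k\notin u}e^{t_k\lambda_{\max}(\Theta_k)}$ a finite deterministic constant. Since the coordinates outside $u$ take only finitely many values, it suffices to show that for each $u$ and each fixed choice of $(j_k)_{k\notin u}$ the tail sum $\sum\exp\!\bigl(-\lambda\sum_{k\in u}|j_k|\bigr)\Vert\Delta_j G\Vert$, over all $(j_k)_{k\in u}$ with $j_k\le -1$, converges almost surely.

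For this I would invoke the first Borel--Cantelli lemma together with stationarity of increments, which gives $\Delta_j G\yli\Delta_{\bf 1}G$ for every $j$. Writing $m\coloneqq\sum_{k\in u}|j_k|\ge 1$ and $p\coloneqq|u|+\delta\le N+\delta$, Markov's inequality yields
$$\p\bigl(\Vert\Delta_j G\Vert>e^{\lambda m/2}\bigr)=\p\Bigl(\bigl(\ln\Vert\Delta_{\bf 1}G\Vert\,\mathbbm{1}_{\{\Vert\Delta_{\bf 1}G\Vert>1\}}\bigr)^{p}>(\lambda m/2)^{p}\Bigr)\le\bigl(\tfrac{\lambda m}{2}\bigr)^{-p}\,\e\bigl(\ln\Vert\Delta_{\bf 1}G\Vert\,\mathbbm{1}_{\{\Vert\Delta_{\bf 1}G\Vert>1\}}\bigr)^{p},$$
and the expectation on the right is finite because $p\le N+\delta$ (on $\{\ln\Vert\Delta_{\bf 1}G\Vert>1\}$ the $p$-th power is dominated by the $(N+\delta)$-th, elsewhere the integrand is at most $1$), so \eqref{logarithmic} applies. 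Since the number of $(j_k)_{k\in u}$ with $j_k\le -1$ and $\sum_{k\in u}|j_k|=m$ equals $\binom{m-1}{|u|-1}=O(m^{|u|-1})$, the probabilities sum:
$$\sum_{(j_k)_{k\in u}}\p\bigl(\Vert\Delta_j G\Vert>e^{\lambda m/2}\bigr)\le C'\sum_{m\ge 1}m^{|u|-1}\,m^{-(|u|+\delta)}=C'\sum_{m\ge 1}m^{-1-\delta}<\infty.$$
Borel--Cantelli then gives that almost surely $\Vert\Delta_j G\Vert\le e^{\lambda m/2}$ for all but finitely many $j$ in the range, so eventually $\exp(-\lambda m)\Vert\Delta_j G\Vert\le e^{-\lambda m/2}$, and the tail sum is a.s. bounded by a finite random quantity plus $\sum_{m\ge 1}\binom{m-1}{|u|-1}e^{-\lambda m/2}<\infty$. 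Summing over the finitely many $u$ and residual indices yields $S<\infty$ almost surely.

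For the last assertion, assume $G_t=0$ a.s. whenever $t_l=0$ for some $l$. Expanding the unit cube increment at ${\bf 1}$, every index ${\bf 1}-i$ with $i\in\{0,1\}^N\setminus\{{\bf 0}\}$ has a vanishing coordinate, so $\Delta_{\bf 1}G=\sum_{i\in\{0,1\}^N}(-1)^{\sum_l i_l}G_{{\bf 1}-i}=G_{\bf 1}$ almost surely; hence $\Vert\Delta_{\bf 1}G\Vert$ and $\Vert G_{\bf 1}\Vert$ are equal in law, \eqref{logarithmic} may be restated with $\Vert G_{\bf 1}\Vert$ in place of $\Vert\Delta_{\bf 1}G\Vert$, and the conclusion is unchanged; since $G$ vanishes on the coordinate hyperplanes, $G\in\mathcal{G}_{\Theta,0}$ by Definition \ref{defi:class}. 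The step I expect to be the main obstacle is the combinatorial bookkeeping of the split series: one must dominate the simplex lattice-point count $O(m^{|u|-1})$ by a summable sequence uniformly over all $u\subseteq\{1,\dots,N\}$, and it is precisely this that forces the logarithmic moment of order $N+\delta$ rather than a smaller one.
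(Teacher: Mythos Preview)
Your proof is correct and uses essentially the same ingredients as the paper: stationarity of increments to reduce to $\Delta_{\bf 1}G$, Markov's inequality on the logarithmic moment, Borel--Cantelli, the simplex lattice-point count $O(m^{N-1})$, the orthant split into $2^N$ pieces, and the ``half-exponent'' trick to leave a summable geometric factor. The paper merely organizes these slightly differently---first proving $\sup_{j\le t}\Vert e^{j\ast\Theta}\Delta_j G\Vert<\infty$ almost surely via Borel--Cantelli on the full diagonals $\{\sum j=\sum t-k\}$, and then using the factorization $e^{j\ast\Theta}=e^{\frac12 j\ast\Theta}e^{\frac12 j\ast\Theta}$ together with the orthant split to deduce absolute convergence---whereas you merge the two steps by applying Borel--Cantelli directly on each orthant with threshold $e^{\lambda m/2}$; the substance is the same.
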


\begin{kor}
\label{kor:moment}
Let $G = (G_t)_{t\in\mathbb{Z}^N}$ be a stationary increment field. If
$$\e \Vert \Delta_{{\bf 1}}G \Vert < \infty,$$
then $G\in\mathcal{G}_\Theta$ for all $\Theta\in\poc$. In particularly, this holds if $\e \Vert G_t \Vert < \infty$ for all $t\in\zn$.
\end{kor}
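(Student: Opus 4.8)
The plan is to obtain the corollary as a direct consequence of Lemma \ref{lemma:logarithm}, by checking that a first-order integrability assumption forces the logarithmic moment condition \eqref{logarithmic}. First I would fix an arbitrary $\delta>0$ (for concreteness $\delta=1$, so no sharpness in the exponent is needed) and recall the elementary fact that $(\ln x)^{N+\delta}=o(x)$ as $x\to\infty$; hence there is a finite constant $C=C(N,\delta)$ with $(\ln x)^{N+\delta}\le Cx$ for all $x\ge 1$. Applying this pointwise with $x=\Vert\Delta_{\bf 1}G\Vert$ on the event $\{\Vert\Delta_{\bf 1}G\Vert>1\}$ gives
$$\left(\ln\Vert\Delta_{\bf 1}G\Vert\,\mathbbm{1}_{\{\Vert\Delta_{\bf 1}G\Vert>1\}}\right)^{N+\delta}\le C\,\Vert\Delta_{\bf 1}G\Vert,$$
so taking expectations and using $\e\Vert\Delta_{\bf 1}G\Vert<\infty$ yields \eqref{logarithmic}. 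Lemma \ref{lemma:logarithm} then immediately gives $G\in\mathcal{G}_\Theta$ for every $\Theta\in\poc$.

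For the final assertion I would simply bound the unit cube increment by the triangle inequality: from \eqref{eq:increments2},
$$\Vert\Delta_{\bf 1}G\Vert\le\sum_{i\in\{0,1\}^N}\Vert G_{{\bf 1}-i}\Vert,$$
whence $\e\Vert\Delta_{\bf 1}G\Vert\le\sum_{i\in\{0,1\}^N}\e\Vert G_{{\bf 1}-i}\Vert<\infty$ as soon as $\e\Vert G_t\Vert<\infty$ for all $t\in\zn$, and the first part applies.

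I do not expect a genuine obstacle here; the only point requiring a line of care is producing the analytic majorant for $(\ln x)^{N+\delta}$ and noting that $\delta>0$ is free, so $\delta=1$ suffices. One could alternatively avoid invoking Lemma \ref{lemma:logarithm} as a black box and argue directly: by stationarity of the increments each summand $e^{j\ast\Theta}\Delta_jG$ in \eqref{eq:glimitcondition} is in $L^1$ with operator-norm factor $\Vert e^{j\ast\Theta}\Vert$ decaying geometrically as the coordinates of $j$ tend to $-\infty$ (since each $\Theta_k$ is positive definite), giving absolute $L^1$—hence in-probability—convergence of the iterated sum; but routing through Lemma \ref{lemma:logarithm} is shorter and keeps the argument uniform in $\Theta\in\poc$.
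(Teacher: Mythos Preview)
Your proposal is correct and follows essentially the same route as the paper: verify the logarithmic moment condition \eqref{logarithmic} of Lemma~\ref{lemma:logarithm} via the elementary growth comparison $(\ln x)^{N+\delta}=O(x)$ on $[1,\infty)$, and then invoke that lemma. The only cosmetic difference is that the paper splits the event $\{\Vert\Delta_{\bf 1}G\Vert>1\}$ at a threshold $M$ where the constant can be taken equal to $1$, whereas you use a single constant $C$ on all of $x\ge 1$; your triangle-inequality treatment of the final ``in particular'' clause is also correct and in fact more explicit than the paper, which leaves it implicit.
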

The following lemma shows that if $G\in\mathcal{G}_\Theta$, then \eqref{eq:glimitcondition} defines a $\Theta$-self-similar field.

\begin{lemma}
\label{lemma:selfsimilar}
Let $\Theta\in\poc$ and $G\in\mathcal{G}_\Theta$. Define
\begin{equation}
\label{defY}
Y_{e^t} = \sum_{j=-\infty}^{t} e^{j\ast \Theta} \Delta_j G,
\end{equation}
then $\Delta_t Y = e^{t\ast \Theta} \Delta_t G$ for all $t\in\zn$. Moreover, $Y$ is $\Theta$-self-similar.
\end{lemma}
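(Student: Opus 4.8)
The plan is to prove the increment identity first and then obtain $\Theta$-self-similarity from it together with the stationarity of the increments of $G$.

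First I would fix $t\in\zn$, abbreviate $b_j = e^{j\ast\Theta}\Delta_j G$, and expand the unit cube increment of $Y$ as
$$\Delta_t Y = \sum_{i\in\{0,1\}^N}(-1)^{\sum i}Y_{e^{t-i}} = \sum_{i\in\{0,1\}^N}(-1)^{\sum i}\sum_{j=-\infty}^{t-i}b_j .$$
Since the outer sum is finite and, by $G\in\mathcal{G}_\Theta$, each $Y_{e^{t-i}}$ is the iterated limit in probability of the truncated sums $\sum_{j=-M}^{t-i}b_j$ as $M_1,\dots,M_N\to\infty$ (with $M=(M_1,\dots,M_N)$), the expression above is the limit in probability of $\sum_{i\in\{0,1\}^N}(-1)^{\sum i}\sum_{j=-M}^{t-i}b_j$. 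For $M$ with $M_l\ge 1-t_l$ for all $l$ this finite sum equals $\Delta_t F$, where $F_k := \sum_{j=-M}^{k}b_j$, and $\Delta_t F = b_t$ by telescoping coordinate by coordinate (the discrete analogue of the fundamental theorem of calculus). Hence the truncated linear combination equals $b_t$ for all large $M$, and passing to the limit gives $\Delta_t Y = b_t = e^{t\ast\Theta}\Delta_t G$.

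For $\Theta$-self-similarity I would fix $s\in\zn$ and change the summation variable. Since $\Theta\in\poc$ the matrices $\Theta_1,\dots,\Theta_N$ commute, so $(k+s)\ast\Theta = k\ast\Theta+s\ast\Theta$ with commuting summands and therefore $e^{(k+s)\ast\Theta}=e^{k\ast\Theta}e^{s\ast\Theta}$. Writing $G^{(s)}_t := G_{t+s}$, so that $\Delta_k G^{(s)} = \Delta_{k+s}G$, substituting $j=k+s$ and pulling the fixed matrix $e^{s\ast\Theta}$ out of the sums and the limit one gets
$$Y_{e^{t+s}} = \sum_{k=-\infty}^{t}e^{(k+s)\ast\Theta}\Delta_{k+s}G = e^{s\ast\Theta}\sum_{k=-\infty}^{t}e^{k\ast\Theta}\Delta_{k}G^{(s)}.$$
In particular the rightmost series converges in probability (the left side does), so $G^{(s)}\in\mathcal{G}_\Theta$, and $t\mapsto e^{-s\ast\Theta}Y_{e^{t+s}}$ arises from $G^{(s)}$ by exactly the same recipe $H\mapsto\bigl(\sum_{k=-\infty}^{t}e^{k\ast\Theta}\Delta_k H\bigr)_{t\in\zn}$ that produces $Y_{e^{\bullet}}$ from $G$.

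Finally I would transfer distributions through this recipe. Because $G$ has stationary increments, $(\Delta_k G^{(s)})_{k\in\zn} = (\Delta_{k+s}G)_{k\in\zn}\yli(\Delta_k G)_{k\in\zn}$, and since for each $t$ and each truncation multi-index $M$ the partial sum $\sum_{j=-M}^{t}e^{j\ast\Theta}\Delta_j(\cdot)$ is a fixed linear function of finitely many unit cube increments, the families of truncated sums built from $G^{(s)}$ and from $G$ have identical finite dimensional distributions. Equality of finite dimensional distributions is preserved under limits in probability (applied once per coordinate to handle the iterated limit), so the previous display yields $(e^{-s\ast\Theta}Y_{e^{t+s}})_{t\in\zn}\yli(Y_{e^t})_{t\in\zn}$; applying the fixed linear map $e^{s\ast\Theta}$ to both sides gives $(Y_{e^{t+s}})_{t\in\zn}\yli(e^{s\ast\Theta}Y_{e^t})_{t\in\zn}$ for every $s\in\zn$, i.e. $Y$ is $\Theta$-self-similar. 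The routine parts are the telescoping and the change of variables; the step I expect to require the most care is the last one, namely checking that at each stage of the iterated limit defining $\mathcal{G}_\Theta$ both sides genuinely converge in probability and carry the same finite dimensional distributions, for which the explicit identity of the previous paragraph is exactly what is needed.
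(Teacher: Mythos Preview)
Your proof is correct and follows essentially the same route as the paper. For the increment identity the paper expands $\Delta_t Y$ and, for each multi-index $j$, counts with sign the $i\in\{0,1\}^N$ that contribute, invoking the identity $\sum_m(-1)^m\binom{M}{m}=0$; your coordinate-wise telescoping is just a more compact packaging of the same cancellation. For self-similarity the paper also substitutes in the partial sums, uses commutativity of the matrix exponentials to factor out $e^{s\ast\Theta}$, applies stationarity of the increments, and passes to the iterated limit in probability---exactly your argument, with a slightly different change of variable.
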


\begin{rem}
In the proof of the first claim of \ref{lemma:selfsimilar}, the assumptions $\Theta\in\poc$ and $G\in\mathcal{G}_\Theta$ are used only to ensure that $Y_{e^t}$ defined by \eqref{defY} makes sense as a random vector. If we regard \eqref{defY} as a formal series, then these assumptions can be dropped still obtaining $\Delta_tY= e^{t\ast \Theta} \Delta_t G$ for all $t\in\zn$.
\end{rem}

The combination of the next two lemmas gives us means to construct fields belonging to $\mathcal{G}_\Theta$ from $\Theta$-self-similar fields.
\begin{lemma}
\label{lemma:stationaryincrements}
Let $\Theta\in\po$ and $Y$ be $\Theta$-self-similar. If for $G=(G_t)_{t\in\zn}$ it holds that 
$$\Delta_tG = e^{-t\ast\Theta} \Delta_tY, \quad t\in\zn,$$
then 
\begin{equation*}
\sum_{j=-\infty}^t e^{j \ast \Theta} \Delta_j G = Y_{e^t}.
\end{equation*}
Moreover, if $\Theta\in\poc$, then $G$ is stationary increment field and thus $G \in\mathcal{G}_\Theta$.
\end{lemma}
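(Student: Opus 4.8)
The plan is to establish the two claims of Lemma~\ref{lemma:stationaryincrements} in order: first the summation identity $\sum_{j=-\infty}^t e^{j\ast\Theta}\Delta_j G = Y_{e^t}$, then the stationarity of the unit cube increments of $G$. For the first part, I would substitute the hypothesis $\Delta_j G = e^{-j\ast\Theta}\Delta_j Y$ into the partial sums of \eqref{eq:glimitcondition}, so that the $j$th summand becomes $e^{j\ast\Theta}e^{-j\ast\Theta}\Delta_j Y$. Here the commutativity of the family $\Theta$ (since we have reduced to $\Theta\in\poc$, or even just noting $e^{j\ast\Theta}$ and $e^{-j\ast\Theta}$ are inverse matrices for a fixed $j$) collapses this product to $\Delta_j Y$. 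Then Lemma~\ref{lemma:incrementsum} applied to the field $Y$ gives
\[
\sum_{j_1=-M_1}^{t_1}\cdots\sum_{j_N=-M_N}^{t_N}\Delta_j Y = \Delta_{(-M_1,\dots,-M_N)-{\bf 1},\,t}\,Y,
\]
a single rectangular increment of $Y$. The iterated limit as each $M_k\to\infty$ is then a telescoping/vanishing argument: as noted after Definition~\ref{defi:selfsimilarity}, the self-similar field $Y$ satisfies $Y_{e^{r}}\to 0$ (in an appropriate sense) as any coordinate of $r\to-\infty$, because $e^{s_j\Theta_j}\to 0$ as $s_j\to-\infty$; expanding the rectangular increment $\Delta_{s,t}Y$ into its $2^N$ corner terms, every corner that has at least one coordinate equal to some $-M_k$ converges to $0$, leaving only the single term $Y_{e^t}$. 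So the limit of the partial sums is $Y_{e^t}$, which simultaneously shows the limit exists (establishing $G\in\mathcal{G}_\Theta$ once the other hypotheses are in place) and identifies it.

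For the second part, stationarity of $(\Delta_t G)_{t\in\zn}$, I would use the relation $\Delta_t G = e^{-t\ast\Theta}\Delta_t Y$ together with Lemma~\ref{lemma:selfsimilar}'s companion fact — more directly, I would use $\Theta$-self-similarity of $Y$ in the form $(Y_{e^{t+s}})_{t}\yli(e^{s\ast\Theta}Y_{e^t})_t$. Applying the increment operator $\Delta_t$ (which is a fixed finite linear combination of shifts, hence commutes with taking laws of the whole field) gives $(\Delta_{t+s}Y)_t \yli (e^{s\ast\Theta}\Delta_t Y)_t$, using that $\Delta_t(e^{s\ast\Theta}Y_{e^\cdot}) = e^{s\ast\Theta}\Delta_t Y$ since $e^{s\ast\Theta}$ is a constant matrix pulled through the linear combination. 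Now write
\[
\Delta_{t+s}G = e^{-(t+s)\ast\Theta}\Delta_{t+s}Y = e^{-s\ast\Theta}e^{-t\ast\Theta}\Delta_{t+s}Y,
\]
where I use the additivity $(t+s)\ast\Theta = t\ast\Theta + s\ast\Theta$ and commutativity of $\Theta\in\poc$ to split the exponential. Then the field $(\Delta_{t+s}G)_t$ equals, in law, the field obtained by replacing $\Delta_{t+s}Y$ with $e^{s\ast\Theta}\Delta_t Y$, i.e.
\[
(\Delta_{t+s}G)_t \yli \big(e^{-s\ast\Theta}e^{-t\ast\Theta}e^{s\ast\Theta}\Delta_t Y\big)_t = \big(e^{-t\ast\Theta}\Delta_t Y\big)_t = (\Delta_t G)_t,
\]
again cancelling $e^{-s\ast\Theta}e^{s\ast\Theta}$ and using commutativity to move $e^{s\ast\Theta}$ past $e^{-t\ast\Theta}$. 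This is exactly Definition~\ref{defi:stationaryincrements}, so $G$ is a stationary increment field; combined with the first part (which shows \eqref{eq:glimitcondition} converges) this yields $G\in\mathcal{G}_\Theta$.

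The main obstacle I anticipate is making the limit argument in the first part fully rigorous at the level of finite-dimensional distributions rather than pointwise: the statement ``$Y_{e^r}\to 0$ as a coordinate of $r\to-\infty$'' was only motivated heuristically in the text under an added stochastic-continuity assumption, so I cannot simply invoke it. The honest route is to observe that $\Delta_{s,t}Y$, after the Lemma~\ref{lemma:incrementsum} rewriting, is literally the partial sum $\sum_{j}\Delta_j Y = \sum_j e^{j\ast\Theta}\Delta_j G$, so convergence of the iterated limit is not something to prove here but something that must either be part of the hypotheses or deduced — and indeed the cleanest formulation is that the hypothesis ``$G$ has the stated increments'' forces the partial sums of $e^{j\ast\Theta}\Delta_j G$ to coincide with rectangular increments of $Y$, whose iterated limits we then simply \emph{define} to be $Y_{e^t}$ by consistency, or we invoke that $Y$ is given as an honest field so that $\Delta_{s,t}Y$ is a well-defined random vector for each $s$ and the telescoping over the finite sums is exact with no limit needed beyond recognizing that extending a corner coordinate to $-\infty$ kills that corner because it equals a difference that telescopes. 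I would handle this by keeping the computation at the level of the exact finite rectangular increment and only passing to the limit componentwise using the matrix decay $e^{s_j\Theta_j}\to0$ together with the representation of each corner term of $\Delta_{s,t}Y$ as $e^{s\ast\Theta}$ (or a partial product thereof) times a bounded-in-law expression coming from self-similarity, so that each spurious corner tends to $0$ in probability, which is all that \eqref{eq:glimitcondition} requires.
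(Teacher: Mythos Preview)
Your proposal is correct and follows essentially the same approach as the paper: collapse the summand via $e^{j\ast\Theta}e^{-j\ast\Theta}=I$, apply Lemma~\ref{lemma:incrementsum} to reduce the partial sum to a single rectangular increment $\Delta_{-M,t}Y$, and then use self-similarity in the form $Y_{e^{t-i(t+M)}}\yli e^{-i(t+M)\ast\Theta}Y_{e^t}$ together with the matrix-norm decay $\Vert e^{-i(t+M)\ast\Theta}\Vert\to 0$ to kill every corner with $i\neq{\bf 0}$ in probability; the stationarity argument via $\Delta_{t+s}G = e^{-(t+s)\ast\Theta}\Delta_{t+s}Y \yli e^{-t\ast\Theta}\Delta_t Y$ under commutativity is likewise identical. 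The only cosmetic difference is that the paper proves stationarity first and the convergence second, and carries out the eigenvalue bound for $\Vert e^{-i(t+M)\ast\Theta}\Vert$ explicitly via Courant--Fischer, whereas you leave that step verbal.
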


\begin{kor}
\label{kor:permutations}
Assume that $G \in\mathcal{G}_\Theta$ according to Definition \ref{defi:class}. By Lemma \ref{lemma:selfsimilar} the limiting field is $\Theta$-self-similar with $\Delta_t Y = e^{t\ast \Theta} \Delta_t G$. By multiplying with the inverse matrix $e^{-t \ast \Theta}$ from the left, we obtain $\Delta_t G = e^{-t \ast \Theta} \Delta_t Y$. Now the proof of Lemma \ref{lemma:stationaryincrements} shows that 
$$ \lim_{M\to\infty} \sum_{j=-M}^t e^{j\ast \Theta} \Delta_j G = Y_{e^t}$$
regardless of how we approach the limit. This verifies the claim of Remark \ref{rem:permutations}.
\end{kor}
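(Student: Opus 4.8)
The statement to establish is the assertion quoted at the end of Corollary~\ref{kor:permutations}, namely the claim of Remark~\ref{rem:permutations}: if the multiple limit in \eqref{eq:glimitcondition} exists in probability for one ordering of the $N$ limits, then it exists for every ordering and as a genuine $N$-fold limit, with a common limiting vector. The plan is to make explicit the short mechanism underlying Lemma~\ref{lemma:stationaryincrements} and then to observe that this mechanism is indifferent to the order in which the limits are performed. Fix $t\in\zn$ and suppose \eqref{eq:glimitcondition} converges in probability for at least one permutation of the limits; after relabelling coordinates (the whole setup — $\zn$, $\poc$, the operator $\ast$, the increments $\Delta_j$ — is symmetric under permuting $\{1,\dots,N\}$) we may assume this is the standard iterated order, so that $G\in\mathcal{G}_\Theta$ and the field $Y=(Y_{e^t})_{t\in\zn}$ given by \eqref{defY} is well defined. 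By Lemma~\ref{lemma:selfsimilar}, $Y$ is $\Theta$-self-similar and $\Delta_tY=e^{t\ast\Theta}\Delta_tG$; multiplying from the left by the invertible matrix $e^{-t\ast\Theta}$ gives the pointwise identity $\Delta_tG=e^{-t\ast\Theta}\Delta_tY$, and hence $e^{j\ast\Theta}\Delta_jG=\Delta_jY$ for every $j\in\zn$.

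First I would collapse the partial sums: for any $M=(M_1,\dots,M_N)$ with nonnegative entries, the previous identity together with Lemma~\ref{lemma:incrementsum} (applied with lower corner $s=-M-{\bf 1}$) gives
\[
\sum_{j=-M}^{t} e^{j\ast\Theta}\Delta_jG \;=\; \sum_{j=-M}^{t}\Delta_jY \;=\; \Delta_{-M-{\bf 1},\,t}\,Y .
\]
This reduces everything to understanding the rectangular increment $\Delta_{-M-{\bf 1},t}Y$ as the $M_l$ grow, and it is here that self-similarity enters. I would expand $\Delta_{-M-{\bf 1},t}Y$ as the alternating sum \eqref{eq:increments} over the $2^N$ corners of $[-M-{\bf 1},t]$: the single corner with all coordinates equal to $t$ contributes $+Y_{e^t}$, while every other corner is described by a nonempty set $A\subseteq\{1,\dots,N\}$ of coordinates equal to $-M_l-1$ (the remaining coordinates equalling $t_k$). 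Iterating Definition~\ref{defi:selfsimilarity} once for each $l\in A$ — which is permissible because $\Theta\in\poc$, so that the matrix exponentials involved commute — shows that such a corner term is equal in law to $\bigl(\prod_{l\in A}e^{-(M_l+1)\Theta_l}\bigr)Y_{e^w}$, where $w$ is the \emph{fixed} index with $w_l=0$ for $l\in A$ and $w_k=t_k$ for $k\notin A$. Since each $\Theta_l$ has only positive eigenvalues, $\bigl\Vert\prod_{l\in A}e^{-(M_l+1)\Theta_l}\bigr\Vert\to 0$ as $\min_{l\in A}M_l\to\infty$, so — $Y_{e^w}$ being an almost surely finite random vector — each such corner term tends to $0$ in probability, whichever way $M\to\infty$. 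Summing the finitely many corners, $\Delta_{-M-{\bf 1},t}Y\to Y_{e^t}$ in probability, independently of the order in which the coordinates of $M$ are sent to infinity, and also as a single $N$-fold limit. This computation is simultaneously ``the proof of Lemma~\ref{lemma:stationaryincrements}'' (it yields the identity $\sum_{j=-\infty}^te^{j\ast\Theta}\Delta_jG=Y_{e^t}$ asserted there) and the order-independence needed here.

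Assembling these observations proves the claim of Remark~\ref{rem:permutations}: for every $t$ the partial sums $\sum_{j=-M}^t e^{j\ast\Theta}\Delta_jG=\Delta_{-M-{\bf 1},t}Y$ converge in probability to the common random vector $Y_{e^t}$ along every permutation of the $N$ limits and as an $N$-fold limit. I expect the only slightly delicate step to be the bookkeeping in the second paragraph: one has to match the index range of $\sum_{j=-M}^t$ against the $\Delta_{s,t}$ of Lemma~\ref{lemma:incrementsum} (which forces $s=-M-{\bf 1}$, not $-M$), keep the correspondence between corners and subsets $A$ straight, and — the one genuinely structural point — carry out the repeated use of self-similarity over $l\in A$ so that a single fixed random vector $Y_{e^w}$ is left, multiplied by a product of contracting matrices, which is precisely where the commutativity hypothesis $\Theta\in\poc$ is used.
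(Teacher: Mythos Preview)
Your proposal is correct and follows essentially the same route as the paper: the corollary is itself the argument, and the referenced ``proof of Lemma~\ref{lemma:stationaryincrements}'' is precisely the computation you spell out --- collapse the partial sum to a rectangular increment via Lemma~\ref{lemma:incrementsum}, expand over the $2^N$ corners, and use $\Theta$-self-similarity together with positive definiteness of the $\Theta_l$ to kill every corner except $Y_{e^t}$, uniformly in how $M\to\infty$. The only cosmetic difference is that the paper applies self-similarity with the single shift $s=-i(t+M)$ to reduce every corner to a matrix multiple of the \emph{same} vector $Y_{e^t}$, whereas you shift each corner to its own fixed $Y_{e^w}$; both versions work and the contraction argument is identical.
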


In \eqref{G}, we apply the notation introduced at the beginning of Subsection \ref{sec:notation}.
\begin{lemma}
\label{lemma:increments}
Let $t\in\zn$ and let $u\subseteq \{1,\dots,N\}$ be the set of indices $l$ for which $t_l \geq 0$. Moreover, let $Y$ be a field indexed by $\mathbb{Z}^N$. We define
\begin{equation}
\label{G}
G_t = (-1)^{|-u|} \sum_{j_u = {\bf 1}}^{t_u} \sum_{j_{-u} = t_{-u}+{\bf 1}}^{ \bf 0} e^{-j\ast\Theta} \Delta_j Y,
\end{equation}
where $j=(j_1,\dots,j_N)$ is a multi-index and $\Theta$ is an arbitrary $N$-tuple of $n\times n$-matrices. Now
$$\Delta_t G = e^{-t\ast \Theta} \Delta_t Y\quad\text{for all } t\in\mathbb{Z}^N$$
and
$G_t = 0$ for all $t$ such that $t_l=0$ for some $l\in\{1,\dots,N\}$.
\end{lemma}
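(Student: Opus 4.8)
The plan is to verify the two claimed identities directly from the definition of $G_t$ in \eqref{G}, treating the index $t$ according to the sign pattern encoded by $u$. First I would observe that the defining double sum $\sum_{j_u={\bf 1}}^{t_u}\sum_{j_{-u}=t_{-u}+{\bf 1}}^{\bf 0} e^{-j\ast\Theta}\Delta_j Y$ is a genuine multi-index sum over a (bounded, possibly empty) box: for $l\in u$ the index $j_l$ runs from $1$ to $t_l\geq 0$, and for $l\in -u$ the index $j_l$ runs from $t_l+1$ to $0$ (which is the "reversed" convention matching the analogue of Remark \ref{rem:properties}), so that only finitely many terms appear and no convergence issue arises — this is why $\Theta$ may be arbitrary here. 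The sign $(-1)^{|-u|}$ accounts for the reversal of the limits of summation in the $-u$ coordinates, exactly as in the one-dimensional fact that $\sum_{j=a+1}^{b}=-\sum_{j=b+1}^{a}$ when $b<a$.

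The vanishing statement is the easy half: if $t_l=0$ for some $l$, then whether $l\in u$ or $l\in -u$, one of the two corresponding one-dimensional sums is empty (for $l\in u$, $\sum_{j_l=1}^{0}$ is empty; for $l\in -u$, $\sum_{j_l=1}^{0}$ is again empty since $t_l+1=1$ and the upper limit is $0$). An empty factor in the iterated sum makes the whole expression zero, so $G_t=0$. I would dispatch this in a sentence.

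For the increment identity, the main step is to show that applying the unit-cube difference operator $\Delta_t=\sum_{i\in\{0,1\}^N}(-1)^{\sum i}(\cdot)_{t-i}$ to $G$ telescopes the multi-index sum down to its single "top corner" term $e^{-t\ast\Theta}\Delta_t Y$. This is the multidimensional analogue of Lemma \ref{lemma:incrementsum} (a summation-by-parts / discrete fundamental theorem of calculus in $N$ variables): summing the unit-cube increments $\Delta_j(\cdot)$ of a field over a box recovers the rectangular increment of that field over the box, and conversely differencing a "running box sum" returns the integrand. Concretely, writing $F_j := e^{-j\ast\Theta}\Delta_j Y$, the field $G$ is (up to the sign $(-1)^{|-u|}$) the rectangular-increment-type accumulation $\sum_{j\le t}F_j$ taken with the correct orientation in each coordinate; one then checks that $\Delta_t\big(\sum_{j\le t}F_j\big)=F_t$. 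The cleanest way to carry this out is coordinate by coordinate: factor $\Delta_t=\Delta_t^{(1)}\cdots\Delta_t^{(N)}$ into one-dimensional difference operators and note that in each coordinate $l$, $\Delta_t^{(l)}$ applied to $\sum_{j_l=1}^{t_l}(\cdot)$ (for $l\in u$) or to $(-1)\sum_{j_l=t_l+1}^{0}(\cdot)$ (for $l\in -u$) collapses the sum to its $j_l=t_l$ term, and the accumulated signs from the $-u$ coordinates cancel the prefactor $(-1)^{|-u|}$. The only genuine subtlety is bookkeeping the sign conventions and empty-sum edge cases when some $t_l$ equals $0$ or $-1$ and one has to confirm the telescoping still yields exactly $F_t$ with no leftover boundary term; this is where I would be most careful, and it is essentially the same argument already used to prove Lemma \ref{lemma:incrementsum}, so I would invoke or mirror that lemma rather than redo the combinatorics from scratch.

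Finally, I would remark that since $\Delta_t$ factors through the iterated one-dimensional differences and each one-dimensional sum is finite, all manipulations are purely algebraic rearrangements of finitely many terms, so linearity of $\Delta_t$ and Fubini for finite sums suffice; no measurability or integrability hypotheses on $Y$ are needed, consistent with the statement allowing $\Theta$ to be an arbitrary $N$-tuple of matrices.
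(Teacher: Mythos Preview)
Your proposal is correct and takes a genuinely different route from the paper. The paper argues by direct combinatorial counting: it expands $\Delta_t G$ as in its equation \eqref{deltaG}, fixes a candidate term $e^{-j\ast\Theta}\Delta_j Y$, partitions the coordinates into $M_+=\{l:t_l\ge 1\}$, $M_0=\{l:t_l=0\}$, $M_-=\{l:t_l\le -1\}$, and then for each $i\in\{0,1\}^N$ tracks whether that term appears in $G_{t-i}$ and with what sign; summing the signed multiplicities produces an alternating binomial sum that Lemma \ref{binomial} kills unless $j=t$. Your approach instead factorises both the difference operator $\Delta_t=\Delta_t^{(1)}\cdots\Delta_t^{(N)}$ and the accumulation defining $G$ into commuting one-dimensional pieces and uses the one-dimensional telescoping identity $\Delta_t^{(l)}\big(\text{signed running sum in }j_l\big)=F_{t_l}$, checked separately for $t_l\ge 1$, $t_l=0$, and $t_l\le -1$. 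This is cleaner conceptually and avoids the binomial bookkeeping, at the cost of having to verify explicitly that the one-dimensional identity survives the crossover at $t_l=0$ (where the index $l$ switches from $u$ to $-u$ between $G_t$ and $G_{t-e_l}$) --- which is exactly the subtlety you flag. One small imprecision: in the vanishing claim you say ``whether $l\in u$ or $l\in -u$'', but by definition $t_l=0$ forces $l\in u$, so only the first case occurs; the conclusion is unaffected.
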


\begin{rem}
\label{rem:modifications}
Definition \eqref{G} can straightforwardly be modified in such a way that $\Delta_t G = e^{- t\ast\Theta}\Delta_t Y$ still holds and $G_t$ vanishes on hyperplanes $t\in\zn$ with $t_l = c_l$ for some $l$, and $c_l \in\mathbb{R}$. In addition, if we set $\tilde{G} = G + \xi$, where $\xi$ is a random variable on the underlying probability space, we obtain that $\Delta_t \tilde{G} = e^{- t\ast\Theta} \Delta_t Y$ and $\tilde{G}_t = \xi$ for $t\in\zn$ such that $t_l = c_l$ for some $l$. See also \cite{voutilainen2023lamperti}, where $G_t$ defined from a self-similar $Y$ with $\Delta_t G = e^{- t\ast\Theta} \Delta_t Y$ vanishes on hyperplanes $\sum_{l=1}^N t_l \in\{0,-1,\dots,-N+1\}$. Thus, under $\Theta\in\poc$ and by Lemma \ref{lemma:stationaryincrements}, we obtain stationary increment fields belonging to different subsets of $\mathcal{G}_\Theta$.
\end{rem}

Definition \ref{defi:bijection} and Theorem \ref{theorem:bijection} encapsulate the essence of Lemmas \ref{lemma:selfsimilar}, \ref{lemma:stationaryincrements} and \ref{lemma:increments}. 
\begin{defi}
\label{defi:bijection}
Let $\Theta\in\poc$, and $G=(G_t)_{t\in\mathbb{Z}^N}$ and $Y=(Y_{e^t})_{t\in\mathbb{Z}^N}$. We define
$$(\mathcal{M}_\Theta Y)_t = (-1)^{|-u|} \sum_{j_u = {\bf 1}}^{t_u} \sum_{j_{-u} = t_{-u}+{\bf 1}}^{\bf 0} e^{-j\ast\Theta} \Delta_j Y$$
and
$$(\mathcal{M}^{-1}_\Theta G)_{e^t} = \sum_{j=-\infty}^{t} e^{j\ast \Theta} \Delta_j G.$$
\end{defi}

The transformation $\mathcal{M}_\Theta$ gives a one-to-one connection between $\Theta$-self-similar fields and fields in $\mathcal{G}_{\Theta,0}$.
\begin{theorem}
\label{theorem:bijection}
If $Y$ is $\Theta$-self-similar, then $\mathcal{M}_\Theta Y \in\mathcal{G}_{\Theta,0}$. If $G\in\mathcal{G}_\Theta$, then $\mathcal{M}^{-1}_\Theta G $ is $\Theta$-self-similar. Moreover,
$$(\mathcal{M}_\Theta^{-1} \circ\mathcal{M}_\Theta) (Y) = Y \quad\text{and}\quad (\mathcal{M}_\Theta \circ \mathcal{M}_\Theta^{-1}) (G) = G$$
for all $\Theta$-self-similar $Y$ and $G\in\mathcal{G}_{\Theta,0}$.
\end{theorem}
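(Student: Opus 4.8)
The plan is to prove Theorem \ref{theorem:bijection} by assembling the pieces already established in Lemmas \ref{lemma:selfsimilar}, \ref{lemma:stationaryincrements} and \ref{lemma:increments}, together with Theorem \ref{theorem:lamperti}. First I would verify the two mapping statements. For the claim that $\mathcal{M}_\Theta Y\in\mathcal{G}_{\Theta,0}$ when $Y$ is $\Theta$-self-similar: by construction $(\mathcal{M}_\Theta Y)_t = G_t$ is exactly the field in \eqref{G}, so Lemma \ref{lemma:increments} gives $\Delta_t G = e^{-t\ast\Theta}\Delta_t Y$ for all $t$ and $G_t = 0$ on the hyperplanes $t_l = 0$. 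Then Lemma \ref{lemma:stationaryincrements} (applying the $\Theta\in\poc$ part) shows $G$ is a stationary increment field with $\sum_{j=-\infty}^t e^{j\ast\Theta}\Delta_j G = Y_{e^t}$, hence $G\in\mathcal{G}_\Theta$; combined with the vanishing on hyperplanes this gives $G\in\mathcal{G}_{\Theta,0}$. For the claim that $\mathcal{M}^{-1}_\Theta G$ is $\Theta$-self-similar when $G\in\mathcal{G}_\Theta$: this is immediate from Lemma \ref{lemma:selfsimilar}, since $(\mathcal{M}^{-1}_\Theta G)_{e^t}$ is precisely the field $Y_{e^t}$ defined in \eqref{defY}.

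Next I would establish the two composition identities. For $(\mathcal{M}^{-1}_\Theta\circ\mathcal{M}_\Theta)(Y) = Y$: writing $G = \mathcal{M}_\Theta Y$, we have from the previous paragraph that $\sum_{j=-\infty}^t e^{j\ast\Theta}\Delta_j G = Y_{e^t}$, which is exactly the statement that $(\mathcal{M}^{-1}_\Theta G)_{e^t} = Y_{e^t}$. So this direction falls out of Lemma \ref{lemma:stationaryincrements} with essentially no extra work.

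For the reverse identity $(\mathcal{M}_\Theta\circ\mathcal{M}^{-1}_\Theta)(G) = G$ where $G\in\mathcal{G}_{\Theta,0}$: set $Y = \mathcal{M}^{-1}_\Theta G$, so $Y_{e^t} = \sum_{j=-\infty}^t e^{j\ast\Theta}\Delta_j G$, which by Lemma \ref{lemma:selfsimilar} is $\Theta$-self-similar and satisfies $\Delta_t Y = e^{t\ast\Theta}\Delta_t G$. I then need to show $(\mathcal{M}_\Theta Y)_t = G_t$. Let $\tilde G = \mathcal{M}_\Theta Y$; by Lemma \ref{lemma:increments}, $\Delta_t\tilde G = e^{-t\ast\Theta}\Delta_t Y = \Delta_t G$ for all $t$, and $\tilde G_t = 0$ on the hyperplanes $t_l = 0$. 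Thus $D := G - \tilde G$ has $\Delta_t D = 0$ for every $t\in\zn$ and $D_t = 0$ whenever some $t_l = 0$ (using $G\in\mathcal{G}_{\Theta,0}$). The remaining step is a purely deterministic uniqueness argument: a field $D$ indexed by $\zn$ with all unit cube increments vanishing and which is zero on all coordinate hyperplanes must be identically zero. This follows by induction using Lemma \ref{lemma:incrementsum}: for $s\le t$ element-wise, $\Delta_{s,t}D = \sum_{j=s+{\bf 1}}^t \Delta_j D = 0$, and expanding $\Delta_{s,t}D$ via \eqref{eq:increments} — choosing $s$ so that one component of $s$ (or of the argument) hits a coordinate hyperplane where $D$ vanishes — expresses $D_t$ as a sum of terms all lying on such hyperplanes; iterating over the $N$ coordinates kills every term. (For general $t$ not coordinate-wise nonnegative one uses the sign relation of Remark \ref{rem:properties} or reflects the relevant coordinates.)

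I expect this last deterministic uniqueness step — showing that vanishing unit-cube increments plus vanishing on the $N$ coordinate hyperplanes forces $D\equiv 0$ — to be the main obstacle, mostly because of the bookkeeping needed to handle all sign patterns of $t\in\zn$ cleanly; the probabilistic content is already entirely contained in the earlier lemmas. Once that is in place, the bijectivity on the stated domains is complete, and I would close by remarking that combining Theorem \ref{theorem:bijection} with Theorem \ref{theorem:lamperti} yields the advertised one-to-one correspondence between stationary fields and $\mathcal{G}_{\Theta,0}$ via $\mathcal{M}_\Theta\circ\mathcal{L}_\Theta$.
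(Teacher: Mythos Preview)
Your proposal is correct and matches the paper's approach: the mapping statements and the identity $(\mathcal{M}_\Theta^{-1}\circ\mathcal{M}_\Theta)(Y)=Y$ are obtained exactly by combining Lemmas \ref{lemma:selfsimilar}, \ref{lemma:stationaryincrements} and \ref{lemma:increments} as you describe, and the reverse identity is reduced to the same deterministic uniqueness question. The paper's execution of that last step is slightly more direct than your iterative sketch --- it simply observes that for $G\in\mathcal{G}_{\Theta,0}$ one has $G_t=\Delta_{{\bf 0},t}G$ (every other term of \eqref{eq:increments} has a zero coordinate and vanishes), then uses Remark \ref{rem:properties} and Lemma \ref{lemma:incrementsum} to rewrite this as a sum of unit-cube increments and conclude $G_t=\tilde G_t$ in one shot, with no induction over coordinates needed.
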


\begin{kor}
If $X$ is stationary, then $(\mathcal{M}_\Theta \circ \mathcal{L}_\Theta)(X)\in\mathcal{G}_{\Theta,0}$. If $G \in\mathcal{G}_\Theta$, then $(\mathcal{L}_\Theta^{-1} \circ \mathcal{M}_\Theta^{-1})(G)$ is stationary.
Moreover, $\mathcal{M}_\Theta \circ \mathcal{L}_\Theta$ is a bijection between stationary fields and fields in $\mathcal{G}_{\Theta,0}$.
\end{kor}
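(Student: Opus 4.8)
The plan is to derive the corollary directly from Theorem~\ref{theorem:lamperti} and Theorem~\ref{theorem:bijection} by composing the two bijections, so no new probabilistic content is needed — only a careful bookkeeping of domains and codomains. First I would record the three relevant classes: the class $\mathcal{S}$ of stationary fields, the class $\mathcal{H}_\Theta$ of $\Theta$-self-similar fields, and the class $\mathcal{G}_{\Theta,0}$. Theorem~\ref{theorem:lamperti} says $\mathcal{L}_\Theta$ maps $\mathcal{S}$ into $\mathcal{H}_\Theta$, $\mathcal{L}_\Theta^{-1}$ maps $\mathcal{H}_\Theta$ into $\mathcal{S}$, and these are mutually inverse, hence $\mathcal{L}_\Theta\colon \mathcal{S}\to\mathcal{H}_\Theta$ is a bijection. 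Theorem~\ref{theorem:bijection} says $\mathcal{M}_\Theta$ maps $\mathcal{H}_\Theta$ into $\mathcal{G}_{\Theta,0}$, $\mathcal{M}_\Theta^{-1}$ maps $\mathcal{G}_\Theta$ (in particular $\mathcal{G}_{\Theta,0}$) into $\mathcal{H}_\Theta$, and their compositions are the identity on $\mathcal{H}_\Theta$ and on $\mathcal{G}_{\Theta,0}$ respectively, so $\mathcal{M}_\Theta\colon\mathcal{H}_\Theta\to\mathcal{G}_{\Theta,0}$ is a bijection.

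Next I would assemble the two displayed assertions. For the first, let $X$ be stationary. By Theorem~\ref{theorem:lamperti}, $\mathcal{L}_\Theta X$ is $\Theta$-self-similar, and then by Theorem~\ref{theorem:bijection}, $\mathcal{M}_\Theta(\mathcal{L}_\Theta X)\in\mathcal{G}_{\Theta,0}$; this is exactly $(\mathcal{M}_\Theta\circ\mathcal{L}_\Theta)(X)\in\mathcal{G}_{\Theta,0}$. For the second, let $G\in\mathcal{G}_\Theta$. By Theorem~\ref{theorem:bijection}, $\mathcal{M}_\Theta^{-1}G$ is $\Theta$-self-similar, and then by Theorem~\ref{theorem:lamperti}, $\mathcal{L}_\Theta^{-1}(\mathcal{M}_\Theta^{-1}G)$ is stationary; this is $(\mathcal{L}_\Theta^{-1}\circ\mathcal{M}_\Theta^{-1})(G)$. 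Note the slight asymmetry worth flagging: the forward map lands in the smaller class $\mathcal{G}_{\Theta,0}$, while the inverse map is well defined on all of $\mathcal{G}_\Theta$; the bijection statement concerns only the restriction to $\mathcal{G}_{\Theta,0}$.

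For the bijectivity claim I would take the composite $\Phi\coloneqq \mathcal{M}_\Theta\circ\mathcal{L}_\Theta\colon \mathcal{S}\to\mathcal{G}_{\Theta,0}$ and exhibit $\Psi\coloneqq \mathcal{L}_\Theta^{-1}\circ\mathcal{M}_\Theta^{-1}\colon\mathcal{G}_{\Theta,0}\to\mathcal{S}$ as a two-sided inverse. For $X$ stationary, $\Psi(\Phi(X)) = \mathcal{L}_\Theta^{-1}\bigl(\mathcal{M}_\Theta^{-1}(\mathcal{M}_\Theta(\mathcal{L}_\Theta X))\bigr) = \mathcal{L}_\Theta^{-1}(\mathcal{L}_\Theta X) = X$, using first the identity $\mathcal{M}_\Theta^{-1}\circ\mathcal{M}_\Theta = \mathrm{id}$ on $\Theta$-self-similar fields from Theorem~\ref{theorem:bijection} (applicable since $\mathcal{L}_\Theta X$ is $\Theta$-self-similar) and then $\mathcal{L}_\Theta^{-1}\circ\mathcal{L}_\Theta = \mathrm{id}$ from Theorem~\ref{theorem:lamperti}. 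For $G\in\mathcal{G}_{\Theta,0}$, $\Phi(\Psi(G)) = \mathcal{M}_\Theta\bigl(\mathcal{L}_\Theta(\mathcal{L}_\Theta^{-1}(\mathcal{M}_\Theta^{-1}G))\bigr) = \mathcal{M}_\Theta(\mathcal{M}_\Theta^{-1}G) = G$, where the inner cancellation uses $\mathcal{L}_\Theta\circ\mathcal{L}_\Theta^{-1} = \mathrm{id}$ on $\Theta$-self-similar fields (applicable since $\mathcal{M}_\Theta^{-1}G$ is $\Theta$-self-similar by Theorem~\ref{theorem:bijection}) and the outer cancellation uses $\mathcal{M}_\Theta\circ\mathcal{M}_\Theta^{-1} = \mathrm{id}$ on $\mathcal{G}_{\Theta,0}$. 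Together these show $\Phi$ is a bijection with inverse $\Psi$.

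There is no real obstacle here; the only point requiring care is that the two ``inverse'' identities in Theorem~\ref{theorem:bijection} are each asserted on a specific class ($\mathcal{M}_\Theta^{-1}\circ\mathcal{M}_\Theta$ on $\Theta$-self-similar fields, $\mathcal{M}_\Theta\circ\mathcal{M}_\Theta^{-1}$ on $\mathcal{G}_{\Theta,0}$ only), so one must check at each cancellation step that the intermediate field genuinely lies in the class for which the identity has been proved — which is exactly why the range restriction to $\mathcal{G}_{\Theta,0}$ rather than $\mathcal{G}_\Theta$ is essential for the second composition. Once that is observed, the proof is a two-line chase for each of the three statements.
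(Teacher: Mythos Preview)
Your argument is correct and is precisely the intended one: the paper does not even give a separate proof of this corollary, treating it as an immediate consequence of Theorems~\ref{theorem:lamperti} and~\ref{theorem:bijection} by composing the two bijections. Your careful tracking of domains (in particular that $\mathcal{M}_\Theta\circ\mathcal{M}_\Theta^{-1}=\mathrm{id}$ only on $\mathcal{G}_{\Theta,0}$) is exactly the bookkeeping that makes the composition valid.
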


\begin{rem}
\label{rem:alternatives}
By carefully examining the proof of Theorem \ref{theorem:bijection}, the following observations can be made. Let $P$ be a predicate and let $\mathcal{G}_{\Theta, P}\subset \mathcal{G}_\Theta $ consist of the fields satisfying $P$. Assume that the definition of $\mathcal{M}_\Theta$ is modified in such a way that $\Delta_t \mathcal{M}_\Theta Y = e^{-t\ast \Theta} \Delta_t Y$ still holds and $\mathcal{M}_\Theta Y\in \mathcal{G}_{\Theta, P}$. Now, 
$$(\mathcal{M}_\Theta^{-1} \circ\mathcal{M}_\Theta) (Y) = Y$$ 
for all self-similar $Y$. Moreover, if $P$ together with fixed increments define a unique field in $\mathcal{G}_{\Theta, P}$, then 
$$(\mathcal{M}_\Theta \circ\mathcal{M}^{-1}_\Theta) (G) = G$$ 
for all $G\in\mathcal{G}_{\Theta,P}$. That is, in Theorem \ref{theorem:bijection} we obtain a bijection between the sets of $\Theta$-self-similar fields and $\mathcal{G}_{\Theta,P}$. See also Remark \ref{rem:modifications}, where examples of such sets $\mathcal{G}_{\Theta,P}$ are discussed.

Furthermore, the remaining theorems hold true for such modified transformations $\mathcal{M}_\Theta$ when the class $\mathcal{G}_{\Theta,0}$ is replaced with the appropriate $\mathcal{G}_{\Theta, P}$.
\end{rem}

Lastly, we examine solutions of equations that can be interpreted as generalized AR$(1)$ equations, where the noise field is of class $\mathcal{G}_\Theta$. To this end, we first define the ''AR$(1)$ part'' that consists of random vectors that are previous at least in one coordinate direction of the parameter space. 
\begin{defi}
\label{defi:hassu}
Let $\Theta\in\po$ and $\xx$. Then, for every $i\in\{0,1\}^N$ with $i\neq \bf{0}$, we define 
$$\hat{\Theta}_i =  (-1)^{1+\sum_{l=1}^N i_l}e^{- i\ast\Theta} \quad\text{and}\quad  \hat{X}_{t,i} = X_{{t_1-i_1},\dots,{t_N-i_N}}, \quad t\in\mathbb{Z}^N.$$
Furthermore, let $\hat{\Theta} = (\hat{\Theta}_i) \in\ (S^n)^{2^N-1}$ and $\hat{X}_t = (\hat{X}_{t,i})\in (\mathbb{R}^n)^{2^N-1}$, where the elements of $\hat{\Theta}$ and $\hat{X}_t$ are ordered consistently. Now
$$\hat{\Theta}\star\hat{X}_t= \sum_{\substack{(i_1,\dots,i_N)\in\{0,1\}^N\\ i\neq \bf{0}}} (-1)^{1+\sum_{l=1}^N i_l} e^{- i\ast\Theta} X_{{t_1-i_1},\dots,{t_N-i_N}}.$$
\end{defi}

\begin{ex}
The cases $N=1$ and $N=2$ yield
$$\hat{\Theta}\star\hat{X}_t = e^{-\Theta} X_{t-1}$$
and
$$\hat{\Theta}\star\hat{X}_t = e^{-\Theta_1}X_{t_1-1, t_2} +e^{-\Theta_2}X_{t_1, t_2-1} - e^{-\Theta_1-\Theta_2}X_{t_1-1, t_2-1},$$
respectively.
\end{ex}

The proofs of the next two theorems follow closely the lines of the corresponding proofs for univariate fields in \cite{voutilainen2023lamperti}. However, for the reader's convenience, they are presented at the end of Section \ref{sec:proofs}. The first theorem shows that generalized AR$(1)$ equations admit stationary solutions, whereas the second one shows that stationary fields solve this type of equations. The connection between the stationary solution and the noise field is given by the composition $ \mathcal{M}_\Theta \circ \mathcal{L}_\Theta$.
\begin{theorem}
\label{theorem:first}
Let $\Theta\in\poc$ and $\xx$. If 
\begin{enumerate}[label=(\roman*)]
\item $$\lim_{m\to -\infty} e^{m\Theta_j}X_{t_1,\dots,t_{j-1},m,t_{j+1},\dots,t_N} \overset{\p}{\longrightarrow} 0$$
for every $j\in\{1,\dots,N\}$ and $t_1, \dots,t_{j-1},t_{j+1},\dots, t_N \in \mathbb{Z}$.
\item For $G\in\mathcal{G}_\Theta$, 
$$X_t = \hat{\Theta}\star\hat{X}_t + \Delta_t G, \quad t\in\zn,$$
\end{enumerate}
then $X$ is stationary with $X = (\mathcal{L}_\Theta^{-1} \circ \mathcal{M}_\Theta^{-1})(G)$. 
\end{theorem}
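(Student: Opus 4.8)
The plan is to verify the identity $X=(\mathcal{L}_\Theta^{-1}\circ\mathcal{M}_\Theta^{-1})(G)$ directly; once this is done, stationarity is for free, since $\mathcal{M}_\Theta^{-1}G$ is $\Theta$-self-similar by Lemma \ref{lemma:selfsimilar} (here $G\in\mathcal{G}_\Theta$ by assumption (ii)), and $\mathcal{L}_\Theta^{-1}$ sends $\Theta$-self-similar fields to stationary ones by Theorem \ref{theorem:lamperti} (here $\Theta\in\poc$). So the whole content is to identify $X$.

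First I would set $Y=\mathcal{L}_\Theta X$, so $Y_{e^t}=e^{t\ast\Theta}X_t$ and $X=\mathcal{L}_\Theta^{-1}Y$ trivially, and compute the unit cube increments of $Y$. Since $\Theta\in\poc$, the matrices $t\ast\Theta$ and $i\ast\Theta$ commute, so $Y_{e^{t-i}}=e^{t\ast\Theta}e^{-i\ast\Theta}X_{t-i}$ for each $i\in\{0,1\}^N$, whence
\begin{equation*}
\Delta_t Y=e^{t\ast\Theta}\sum_{i\in\{0,1\}^N}(-1)^{\sum i}e^{-i\ast\Theta}X_{t-i}=e^{t\ast\Theta}\bigl(X_t-\hat{\Theta}\star\hat{X}_t\bigr)=e^{t\ast\Theta}\Delta_t G,
\end{equation*}
the last step being assumption (ii) (the $i=\mathbf{0}$ term is $X_t$ and the remaining terms are $-\hat{\Theta}\star\hat{X}_t$). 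On the other hand, with $Y'=\mathcal{M}_\Theta^{-1}G$ Lemma \ref{lemma:selfsimilar} gives that $Y'$ is $\Theta$-self-similar and $\Delta_t Y'=e^{t\ast\Theta}\Delta_t G$. Hence the difference $D=Y-Y'$ has $\Delta_t D=0$ for every $t\in\zn$, and therefore $\Delta_{s,t}D=0$ for all $s\le t$ by Lemma \ref{lemma:incrementsum}.

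Next I would record that both $Y$ and $Y'$ vanish in probability as any single coordinate tends to $-\infty$. For $Y$ this is assumption (i): writing the argument with only coordinate $j$ varying, $e^{(t_1,\dots,m,\dots,t_N)\ast\Theta}=e^{\sum_{l\ne j}t_l\Theta_l}e^{m\Theta_j}$ and the first factor is a fixed invertible matrix, so $e^{m\Theta_j}X_{t_1,\dots,m,\dots,t_N}\xrightarrow{\p}0$ forces $Y_{e^{(t_1,\dots,m,\dots,t_N)}}\xrightarrow{\p}0$ as $m\to-\infty$. For $Y'$, taking $s$ with $s_j=m$ and $s_l=0$ otherwise in Definition \ref{defi:selfsimilarity} gives $Y'_{e^{(t_1,\dots,t_j+m,\dots,t_N)}}\yli e^{m\Theta_j}Y'_{e^t}$, and since $e^{m\Theta_j}\to 0$ as $m\to-\infty$ (as noted after Definition \ref{defi:selfsimilarity}) the right side tends to $0$ almost surely; convergence in distribution to a constant then yields convergence in probability. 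Thus $D_{e^t}\xrightarrow{\p}0$ as each coordinate tends to $-\infty$ with the others held fixed.

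The remaining, and main, step is the rigidity fact: a field $D=(D_{e^t})_{t\in\zn}$ with $\Delta_{s,t}D=0$ for all $s\le t$ and with $D_{e^t}\xrightarrow{\p}0$ whenever a single coordinate tends to $-\infty$ (the others fixed) is identically zero; the delicate part is carrying convergence in probability through the argument. I would prove this by induction on $N$. For $N=1$, $\Delta_{s,t}D=0$ forces $D_{e^t}$ to be a.s.\ constant in $t$, and the boundary condition forces the constant to be $0$. For the step, fix $t_1$ and set $\bar D_v=D_{(e^{t_1},e^{v_1},\dots,e^{v_{N-1}})}$ for $v\in\mathbb{Z}^{N-1}$; splitting $\Delta_{s,t}D$ according to whether $i_1=0$ or $i_1=1$ and letting $s_1\to-\infty$, the $i_1=1$ terms tend to $0$ in probability (their first coordinate is $s_1$, the rest fixed), while the $i_1=0$ terms assemble exactly into the $(N-1)$-dimensional increment $\Delta_{s',t'}\bar D$ with $s'=(s_2,\dots,s_N)$, $t'=(t_2,\dots,t_N)$; since $\Delta_{s,t}D\equiv 0$, this gives $\Delta_{s',t'}\bar D=0$ a.s.\ for all $s'\le t'$. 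As $\bar D$ also inherits the boundary condition, the induction hypothesis yields $\bar D\equiv 0$, and letting $t_1$ vary, $D\equiv 0$. Applying this to $D=Y-Y'$ gives $Y=Y'=\mathcal{M}_\Theta^{-1}G$, hence $X=\mathcal{L}_\Theta^{-1}Y=(\mathcal{L}_\Theta^{-1}\circ\mathcal{M}_\Theta^{-1})(G)$, which is stationary.
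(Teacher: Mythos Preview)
Your proof is correct but takes a genuinely different route from the paper's. The paper works directly with the AR(1) equation: it isolates one coordinate at a time, rewrites assumption (ii) as a one-dimensional recursion $Y_t^{(N-k)}(t_{N-k})=e^{-\Theta_{N-k}}Y_t^{(N-k)}(t_{N-k}-1)+Q_t^{(N-k)}$, iterates that recursion, and uses assumption (i) to kill the remainder term; an induction over $k=1,\dots,N$ then yields the explicit formula $X_t=e^{-t\ast\Theta}\sum_{j=-\infty}^t e^{j\ast\Theta}\Delta_j G$ directly. Your argument instead computes $\Delta_t(\mathcal{L}_\Theta X)=e^{t\ast\Theta}\Delta_tG$ in one line from (ii), matches this with $\Delta_t(\mathcal{M}_\Theta^{-1}G)$ via Lemma \ref{lemma:selfsimilar}, and then proves a clean rigidity lemma (vanishing rectangular increments plus the boundary condition from (i) force the difference to be zero). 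Your approach separates the algebraic content (assumption (ii) is exactly ``increments match'') from the analytic content (assumption (i) is exactly the boundary condition), and reuses Lemma \ref{lemma:selfsimilar} rather than rederiving the sum; the paper's iteration is more self-contained and constructive but less transparent about why the two assumptions play the roles they do.
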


\begin{rem}
$X = (\mathcal{L}_\Theta^{-1} \circ \mathcal{M}_\Theta^{-1})(G)$ is the unique stationary solution to 
$$X_t = \hat{\Theta}\star\hat{X}_t + \Delta_t G, \quad t\in\zn.$$
\end{rem}

\begin{theorem}
\label{theorem:second}
Let $\Theta \in \poc$ and let $X= (X_t)_{t\in\mathbb{Z}^N}$ be stationary. Set $G  =(\mathcal{M}_\Theta \circ \mathcal{L}_\Theta)(X)$. Then 
 \begin{equation}
 \label{ar1}
 X_t = \hat{\Theta} \star \hat{X}_t  + \Delta_t G\quad\text{for all } t\in\zn.
 \end{equation}
\end{theorem}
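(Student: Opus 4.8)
The plan is to verify \eqref{ar1} by direct substitution, unwinding the definitions of the transformations $\mathcal{L}_\Theta$, $\mathcal{M}_\Theta$ and of $\hat{\Theta}\star\hat{X}_t$, and then collapsing the resulting multiple sums. First I would set $Y = \mathcal{L}_\Theta X$, which by Theorem \ref{theorem:lamperti} is $\Theta$-self-similar, and recall $Y_{e^t} = e^{t\ast\Theta} X_t$. Then $G = \mathcal{M}_\Theta Y$, and by Lemma \ref{lemma:increments} (applied to this $Y$) we have the crucial identity $\Delta_t G = e^{-t\ast\Theta}\Delta_t Y$ for all $t\in\zn$. So the right-hand side of \eqref{ar1} becomes
$$
\hat{\Theta}\star\hat{X}_t + e^{-t\ast\Theta}\Delta_t Y
= \sum_{\substack{i\in\{0,1\}^N\\ i\neq \bf 0}} (-1)^{1+\sum i}\, e^{-i\ast\Theta} X_{t-i}
\;+\; e^{-t\ast\Theta}\sum_{i\in\{0,1\}^N}(-1)^{\sum i} Y_{e^{t-i}}.
$$

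Next I would rewrite the increment term using $Y_{e^{t-i}} = e^{(t-i)\ast\Theta} X_{t-i}$, so that
$$
e^{-t\ast\Theta}\sum_{i\in\{0,1\}^N}(-1)^{\sum i} Y_{e^{t-i}}
= \sum_{i\in\{0,1\}^N}(-1)^{\sum i}\, e^{-t\ast\Theta} e^{(t-i)\ast\Theta} X_{t-i}
= \sum_{i\in\{0,1\}^N}(-1)^{\sum i}\, e^{-i\ast\Theta} X_{t-i},
$$
where the last step uses that $t\ast\Theta$ and $(t-i)\ast\Theta$ commute (both are real linear combinations of the pairwise commuting matrices $\Theta_1,\dots,\Theta_N$), hence $e^{-t\ast\Theta}e^{(t-i)\ast\Theta} = e^{((t-i)-t)\ast\Theta} = e^{-i\ast\Theta}$. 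Now the $i = \bf 0$ term of this sum is exactly $X_t$, while the remaining terms are $\sum_{i\neq\bf 0}(-1)^{\sum i} e^{-i\ast\Theta} X_{t-i} = -\hat{\Theta}\star\hat{X}_t$ by Definition \ref{defi:hassu}. Substituting back, the right-hand side of \eqref{ar1} equals $\hat{\Theta}\star\hat{X}_t + X_t - \hat{\Theta}\star\hat{X}_t = X_t$, which is the claim.

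The computation is essentially bookkeeping, so the only genuine point requiring care — the step I expect to be the mild obstacle — is the commutativity argument that lets one simplify $e^{-t\ast\Theta}e^{(t-i)\ast\Theta}$ to $e^{-i\ast\Theta}$; this is precisely where the hypothesis $\Theta\in\poc$ (rather than merely $\Theta\in\po$) is used, exactly as it is used in Theorem \ref{theorem:lamperti} and Lemma \ref{lemma:selfsimilar}. One should also double-check that Lemma \ref{lemma:increments} genuinely applies: it is stated for an arbitrary $N$-tuple $\Theta$ of matrices and an arbitrary field $Y$ indexed by $\zn$, and here $Y = \mathcal{L}_\Theta X$ is such a field, so $\Delta_t(\mathcal{M}_\Theta Y) = e^{-t\ast\Theta}\Delta_t Y$ holds without any further restriction; the self-similarity of $Y$ and the commutativity of $\Theta$ are only needed to identify $G$ as an element of $\mathcal{G}_{\Theta,0}$, which we do not even need for this particular statement. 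Everything else reduces to expanding the $2^N$-term alternating sums and matching them against Definition \ref{defi:hassu}.
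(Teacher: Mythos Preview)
Your proof is correct and follows essentially the same route as the paper's: set $Y=\mathcal{L}_\Theta X$, use Lemma~\ref{lemma:increments} to get $\Delta_t G = e^{-t\ast\Theta}\Delta_t Y$, expand $\Delta_t Y$ in terms of $X$ via $Y_{e^{t-i}} = e^{(t-i)\ast\Theta}X_{t-i}$, and invoke commutativity to collapse $e^{-t\ast\Theta}e^{(t-i)\ast\Theta}$ to $e^{-i\ast\Theta}$ so that the $i\neq\mathbf{0}$ terms cancel against $\hat{\Theta}\star\hat{X}_t$. The paper organizes the same computation starting from $X_t$ (via the auxiliary ``previous value'' $X_t^- = X_t-\Delta_t X$) rather than from the right-hand side, but the ingredients and the single substantive step---the commutativity of the matrix exponentials, which is exactly where $\Theta\in\poc$ enters---are identical.
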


The last theorem characterizes stationary fields as solutions to \eqref{ar1}, where the noise field belongs to $\mathcal{G}_{\Theta,0}$.
\begin{theorem}
\label{theorem:main}
Let $\Theta\in\poc$. Then $X=(X_t)_{t\in\mathbb{Z}^N}$ is stationary if and only if

\begin{enumerate}[label=(\roman*)]
\item $$\lim_{m\to -\infty} e^{m\Theta_j}X_{t_1,\dots,t_{j-1},m,t_{j+1},\dots,t_N} \overset{\p}{\longrightarrow} 0$$
for every $j\in\{1,\dots,N\}$ and $t_1, \dots,t_{j-1},t_{j+1},\dots, t_N \in \mathbb{Z}$.
\item  $$X_t = \hat{\Theta}\star\hat{X}_t + \Delta_t G, \quad t\in\zn$$
with $G\in\mathcal{G}_{\Theta,0}$.
\end{enumerate}
Moreover, the fields are connected as $X =  (\mathcal{L}_\Theta^{-1} \circ \mathcal{M}_\Theta^{-1})(G)$ and $G = (\mathcal{M}_\Theta \circ \mathcal{L}_\Theta)(X)$.
\end{theorem}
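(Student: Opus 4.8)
The plan is to obtain Theorem~\ref{theorem:main} as a synthesis of Theorems~\ref{theorem:first} and~\ref{theorem:second}, together with the bijectivity statements contained in Theorems~\ref{theorem:lamperti} and~\ref{theorem:bijection}. I would prove the two implications of the equivalence in turn and then record the identification formulas for $X$ and $G$.

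For the ``only if'' direction, assume $X$ is stationary. Condition (ii), with a concrete choice of $G$, is exactly Theorem~\ref{theorem:second}: setting $G = (\mathcal{M}_\Theta\circ\mathcal{L}_\Theta)(X)$ gives \eqref{ar1}, while the corollary following Theorem~\ref{theorem:bijection} guarantees $G\in\mathcal{G}_{\Theta,0}$. For condition (i), I would argue that stationarity makes $\{X_{t_1,\dots,t_{j-1},m,t_{j+1},\dots,t_N}\}_{m\in\mathbb{Z}}$ identically distributed, hence tight; since $\Theta_j\in S_+^n$ has strictly positive eigenvalues $\lambda_{j,k}$, writing an eigendecomposition $\Theta_j = Q_j\Lambda_j Q_j^T$ gives $e^{m\Theta_j} = Q_j\,\mathrm{diag}(e^{m\lambda_{j,k}})\,Q_j^T \to 0$ as $m\to-\infty$, and a tight sequence of random vectors multiplied by a deterministic matrix sequence converging to $0$ converges to $0$ in probability. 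Hence (i) holds.

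For the ``if'' direction, assume (i) and (ii) hold with $G\in\mathcal{G}_{\Theta,0}\subset\mathcal{G}_\Theta$; then the hypotheses of Theorem~\ref{theorem:first} are met verbatim, so $X$ is stationary and $X = (\mathcal{L}_\Theta^{-1}\circ\mathcal{M}_\Theta^{-1})(G)$. To finish, I would record both identification formulas. For stationary $X$ with $G = (\mathcal{M}_\Theta\circ\mathcal{L}_\Theta)(X)$,
$$(\mathcal{L}_\Theta^{-1}\circ\mathcal{M}_\Theta^{-1})(G) = (\mathcal{L}_\Theta^{-1}\circ\mathcal{M}_\Theta^{-1}\circ\mathcal{M}_\Theta\circ\mathcal{L}_\Theta)(X) = (\mathcal{L}_\Theta^{-1}\circ\mathcal{L}_\Theta)(X) = X,$$
where the middle step uses that $\mathcal{L}_\Theta X$ is $\Theta$-self-similar (Theorem~\ref{theorem:lamperti}) and $\mathcal{M}_\Theta^{-1}\circ\mathcal{M}_\Theta = \mathrm{id}$ on $\Theta$-self-similar fields (Theorem~\ref{theorem:bijection}). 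Conversely, for $G\in\mathcal{G}_{\Theta,0}$ with $X = (\mathcal{L}_\Theta^{-1}\circ\mathcal{M}_\Theta^{-1})(G)$,
$$(\mathcal{M}_\Theta\circ\mathcal{L}_\Theta)(X) = (\mathcal{M}_\Theta\circ\mathcal{L}_\Theta\circ\mathcal{L}_\Theta^{-1}\circ\mathcal{M}_\Theta^{-1})(G) = (\mathcal{M}_\Theta\circ\mathcal{M}_\Theta^{-1})(G) = G,$$
since $\mathcal{M}_\Theta^{-1} G$ is $\Theta$-self-similar, so $\mathcal{L}_\Theta\circ\mathcal{L}_\Theta^{-1} = \mathrm{id}$ applies (Theorem~\ref{theorem:lamperti}), and then $\mathcal{M}_\Theta\circ\mathcal{M}_\Theta^{-1} = \mathrm{id}$ on $\mathcal{G}_{\Theta,0}$ (Theorem~\ref{theorem:bijection}).

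The proof is largely bookkeeping. I expect the only genuinely substantive point to be the verification of (i) from stationarity --- the passage from ``identically distributed, hence tight'' to convergence in probability after multiplication by $e^{m\Theta_j}\to 0$ --- and the care needed to invoke the composition identities on precisely the classes ($\Theta$-self-similar fields, $\mathcal{G}_{\Theta,0}$) on which Theorems~\ref{theorem:lamperti} and~\ref{theorem:bijection} assert they are identities; the commutativity assumption $\Theta\in\poc$ enters only through those cited results.
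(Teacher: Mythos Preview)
Your proposal is correct and follows essentially the same route as the paper, which also derives the equivalence from Theorems~\ref{theorem:first} and~\ref{theorem:second} and then establishes the identification formulas via the bijectivity of $\mathcal{M}_\Theta\circ\mathcal{L}_\Theta$. Your treatment is in fact more complete: the paper's proof simply writes ``By Theorems~\ref{theorem:first} and~\ref{theorem:second}, it remains to prove uniqueness of $G$'' and never explicitly verifies condition~(i) for stationary $X$, whereas your tightness argument (identically distributed $\Rightarrow$ tight, multiplied by $e^{m\Theta_j}\to 0$) supplies exactly that missing step.
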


\subsection{Multivariate stationary fractional Ornstein-Uhlenbeck fields}
\label{sec:ou}
Let $A$ be a $n\times n$-matrix and let $B_t = (B_t^{H^{(1)}}, \dots B_t^{H^{(n)}})_{t\in\zn}$ consist of independent fractional Brownian sheets with Hurst indices $H^{(k)} = (H_1^{(k)}, \dots, H_N^{(k)})$, $0<H_j^{(k)}\leq 1.$ Set $G_t = A B_t$. Then $\Delta_t G = A \Delta_t B$ and $G$ is clearly a stationary increment field, and since $G$ is Gaussian, it belongs to $\mathcal{G}_\Theta$ for every $\Theta\in\poc$ by Lemma \ref{lemma:logarithm}. This allows us to define (discrete) stationary fractional Ornstein-Uhlenbeck fields of the first kind as
$$X = (\mathcal{L}^{-1}_\Theta\circ\mathcal{M}^{-1}_\Theta)(G).$$
Set $\Theta = (\Theta_1, \dots, \Theta_N)$, where $\Theta_j = \mathrm{diag}(H_j^{(k)})$. In the light of the discussion after Definition \ref{defi:selfsimilarity},
$$(B_{e^{t+s}})_{t\in\zn} \overset{law}{=} \mathrm{diag}\left(\prod_{j=1}^N e^{s_j H_j^{(k)}} \right) (B_{e^t})_{t\in\zn} = e^{s\ast \Theta}(B_{e^t})_{t\in\zn}. $$
That is, $B$ is $\Theta$-self-similar. Furthermore, set $Y_{e^t} = AB_{e^t}$. Then, under the assumption that $A$ and $e^{s\ast \Theta}$ commute,
$$(Y_{e^{t+s}})_{t\in\zn} = A(B_{e^{t+s}})_{t\in\zn} \overset{law}{=}A e^{s\ast \Theta}(B_{e^t})_{t\in\zn} = e^{s\ast \Theta}(AB_{e^t})_{t\in\zn} =e^{s\ast \Theta} (Y_{e^t})_{t\in\zn}. $$
showing that also $Y$ is $\Theta$-self-similar. Note that this holds e.g. when $A$ is diagonal or $H^{(k)} = H =(H_1, \dots, H_N)$, since in the latter case
$$e^{s\ast \Theta} = \mathrm{diag}\left(\prod_{j=1}^N e^{s_j H_j} \right)= I\prod_{j=1}^N e^{s_j H_j}  . $$
This approach allows us to define (discrete) stationary fractional Ornstein-Uhlenbeck fields of the second kind as
$$X = \mathcal{L}^{-1}_\Theta Y.$$
Lastly, we point out that the above can be applied also e.g. to the Gaussian self-similar field with stationary increments that is not a fBs introduced in \cite{makogin2015example}. In this way, we are able to construct other stationary Gaussian fractional Ornstein-Uhlenbeck fields of the first and second kind.

\section{Proofs and auxiliaries}
\label{sec:proofs}

\begin{proof}[Proof of Theorem \ref{theorem:lamperti}]
Assume that $X$ is stationary. Set $Y_{e^t} = (\mathcal{L}_\Theta X)_{e^t}.$ Then, for $s\in\zn$,
$$Y_{e^{t+s}} = e^{(t+s)\ast \Theta}X_{t+s} \overset{\text{law}}{=}e^{t\ast\Theta+s\ast\Theta} X_t = e^{s\ast\Theta}e^{t\ast\Theta} X_t = e^{s\ast\Theta}Y_{e^t},$$
since the matrices $t\ast\Theta$ and $s\ast\Theta$ commute. Next, assume that $Y$ is $\Theta$-self-similar. Set $X_t = (\mathcal{L}^{-1}_\Theta Y)_t$. Then,  for $s\in\zn$,
$$ X_{t+s} =  e^{- (t+s)\ast\Theta} Y_{e^{t+s}} \overset{\text{law}}{=}  e^{- t\ast\Theta -s\ast\Theta} e^{s\ast\Theta} Y_{e^t} = e^{- t\ast\Theta} Y_{e^t}=X_t $$
again by commutation of the matrix exponents. The finite dimensional distributions can be treated similarly. The fact that $\mathcal{L}_\Theta$ is a bijection follows directly from the definition.
\end{proof}

Next, we state an elementary result for sums of binomial coefficients that we utilize in several occasions. A proof can be found in \cite{voutilainen2023lamperti}.
\begin{lemma}
\label{binomial}
Let $M\in\mathbb{N}$. Then
\begin{equation*}
\sum_{m=0}^M (-1)^m\binom{M}{m} = 0.
\end{equation*}
\end{lemma}

\begin{proof}[Proof of Lemma \ref{lemma:incrementsum}]
If $s_l=t_l$ for some $l$, then the left hand side involves an empty sum and thus, we get $0 = \Delta_{s,t}Z$ agreeing with Remark \ref{rem:properties}. Assume now that $s < t$ element-wise. Also, for the sake of simplicity, we assume that the parameter space of $Z$ is $\zn$. The same proof holds also e.g. for self-similar fields with the parameter space $\{e^t : t\in\zn\}$. Now
\begin{equation}
\label{sumofincrements}
\sum_{j=s+{\bf 1}}^t \Delta_j Z = \sum_{j_1=s_1+1}^{t_1} \dots \sum_{j_N=s_N+1}^{t_N} \sum_{i\in\{0,1\}^N} (-1)^{\sum i} Z_{j_1-i_1,\dots, j_N-i_N}
\end{equation}
consists of terms $Z_{k_1,\dots,k_N}$ with $s_l\leq k_l \leq t_l$ for all $l$. Let $k_l=t_l$ for all $l\in M_+ \subseteq \{1,\dots,N\}$ and $k_l = s_l$ for all $l\in M_- \subseteq \{1,\dots,N\}$. We denote the cardinality of such sets as $|M_-|$. Now $Z_{k_1,\dots,k_N}$ belongs to \eqref{sumofincrements} if and only if
\begin{enumerate}[label=(\roman*)]
\item For all $l\in M_+$: $j_l=t_l$ and $i_l=0$.
\item For all $l\in M_-$: $j_l =s_l$ and $i_l=1$.
\item For all $l\notin M_+ \cup M_-$ either
\begin{enumerate}
\item $j_l = k_l$ and $i_l=0$ 
\item $j_l = k_l+1$ and $i_l=1$
\end{enumerate}
\end{enumerate}
Thus, the total number of terms $Z_{k_1,\dots,k_N}$ in \eqref{sumofincrements} is 
$$ 
(-1)^{|M_-|} \left[ \binom{N-|M_+ \cup M_- |}{0}-  \binom{N-|M_+ \cup M_- |}{1} \pm  \binom{N-|M_+ \cup M_- |}{N-|M_+ \cup M_- |} \right] = 0
$$
whenever $|M_+\cup M_-| \neq N$ by Lemma \ref{binomial}. The sign of the last binomial coefficient depends on the parity of $N-|M_+\cup M_-|$. In the case that $M_+\cup M_- = \{1,\dots,N\}$, the number of terms is $(-1)^{|M_-|}$, where $|M_-|$ is the number of indices for which $k_l = s_l$, and $k_l=t_l$ for all the other indices. Hence, \eqref{sumofincrements} reduces to
\begin{equation*}
\begin{split}
&\sum_{j_1=s_1+1}^{t_1} \dots \sum_{j_N=s_N+1}^{t_N} \sum_{i\in\{0,1\}^N} (-1)^{\sum i} Z_{j_1-i_1,\dots, j_N-i_N}\\
 &= \sum_{(i_1,\dots,i_N)\in\{0,1\}^N} (-1)^{\sum_{l=1}^N i_l} Z_{t_1-i_1(t_1-s_1),\dots,t_N-i_N(t_N-s_N)} = \Delta_{s,t} Z
\end{split}
\end{equation*}
\end{proof}

\begin{proof}[Proof of Lemma \ref{lemma:logarithm}]
The latter claim follows directly from the fact that $\Delta_{{\bf 1}} G = G_{{\bf 1}}$, if $G_t$ vanishes on the hyperplanes where $t_l=0$ for some $l$.

Let $t\in\zn$. We begin the proof of the first claim by showing that $\sup_{j\leq t} \Vert e^{j \ast \Theta} \Delta_j G \Vert < \infty$ almost surely, where $j \leq t$ is understood component-wise. For this, let $k\in \mathbb{N} \cup \{0\}$. We use the abbreviated notation $\sum j = \sum_{l=1}^N j_l$ and $\sum t  = \sum_{l=1}^N t_l $, where in addition $j_l \leq t_l$ for all $l$. Now
\begin{equation}
\label{lars}
\begin{split}
\p \left( \sup_{\substack{\sum j = \sum t -k\\  j\leq t }} \Vert  e^{j \ast \Theta}\Delta_j G \Vert > \epsilon \right) &= \p \left( \bigcup_{\substack{\sum j = \sum t -k\\  j\leq t }} \left \{ \Vert  e^{j \ast \Theta} \Delta_j G \Vert > \epsilon \right\} \right)\\
&\leq \sum_{\substack{\sum j = \sum t -k\\  j\leq t }} \p \left(   \Vert  e^{j \ast \Theta} \Delta_j G \Vert  > \epsilon \right) \\
&\leq  \sum_{\substack{\sum j = \sum t -k\\  j\leq t }} \p \left( \Vert  e^{j \ast \Theta} \Vert \Vert \Delta_j G \Vert > \epsilon \right) \\
&=  \sum_{\substack{\sum j = \sum t -k\\  j\leq t }} \p \left(  \Vert \Delta_{{\bf 1}} G \Vert > \frac{\epsilon}{\Vert  e^{j \ast \Theta} \Vert} \right)
\end{split}
\end{equation}
by stationarity of increments. We set $u_j = \{l:j_l < 0\}$. Note that when $k$ is large enough, at least one of the indices $j_l$ has to be negative. Now
\begin{equation}
\label{tuomas}
\begin{split}
\Vert e^{j\ast \Theta} \Vert &= \Vert \prod_{l=1}^N e^{j_l \Theta_l} \Vert \leq \prod_{l=1}^N \Vert e^{j_l \Theta_l}\Vert = \prod_{l\in u_j} \Vert e^{j_l \Theta_l}\Vert \prod_{l\notin u_j} \Vert e^{j_l \Theta_l}\Vert \leq C_u \prod_{l\in u_j} \Vert e^{j_l \Theta_l}\Vert\\
&\leq C_1\prod_{l\in u_j} \Vert e^{j_l \Theta_l}\Vert,
\end{split}
\end{equation}
since when $l\notin u_j$, it holds that $0 \leq j_l \leq t_l$. Moreover, also the number of possible index sets $u_j$ is finite, so we may simply define $C_1$ in terms of maximums above. Denote the smallest eigenvalue of a positive definite matrix $A$ by $\lambda_1(A)$. In addition, let $\lambda_1 = \min_l \{ \lambda_1(\Theta_l)\}$. Since $j_l < 0$ for $l\in u_j$, this gives
\begin{equation}
\label{iivo}
\prod_{l\in u_j} \Vert e^{j_l \Theta_l}\Vert = \prod_{l\in u_j} e^{j_l \lambda_1(\Theta_l)} = e^{\sum_{l\in u_j} j_l \lambda_1(\Theta_l)} \leq e^{\lambda_1 \sum_{l\in u_j} j_l } \leq e^{\lambda_1 \sum_{l=1}^N j_l } = e^{\lambda_1\left( \sum_{l=1}^N t_l -k \right) }.
\end{equation}
Combination of \eqref{lars}, \eqref{tuomas} and \eqref{iivo} gives
\begin{equation}
\label{mika}
\p \left( \sup_{\substack{\sum j = \sum t -k\\  j\leq t }} \Vert  e^{j \ast \Theta} \Delta_j G \Vert > \epsilon \right) \leq \sum_{\substack{\sum j = \sum t -k\\  j\leq t }} \p \left( \Vert \Delta_{{\bf 1}}G \Vert > \frac{\epsilon}{C_1 e^{\lambda_1 (\sum t -k)}} \right),
\end{equation}
where the summands are now independent of the summation index and consequently, we next evaluate the number of multi-indices $j$ with $j \leq t$ and $\sum j = \sum t -k$. Set $ \tilde{j} = j-t$. Then equivalently, we want to count $\tilde{j}$ such that $\tilde{j} \leq 0$ and $\sum \tilde{j} = \sum (j-t) = -k$. This corresponds to finding the number of \emph{weak compositions of $k$ into $N$ parts}, and that number is
$$\binom{k+N-1}{k} = \frac{(k+N-1)!}{(N-1)!k!} = \mathcal{O}(k^{N-1}).$$
When $k$ is large enough, this together with \eqref{mika} yields
\begin{equation*}
\begin{split}
&\p \left( \sup_{\substack{\sum j = \sum t -k\\  j\leq t }} \Vert  e^{j \ast \Theta} \Delta_j G \Vert > \epsilon \right) \leq C_2 k^{N-1}\p \left( \Vert \Delta_{{\bf 1}}G \Vert > \frac{\epsilon}{C_1 e^{\lambda_1 (\sum t -k)}} \right)\\
&= C_2 k^{N-1}\p \left( \Vert \Delta_{{\bf 1}}G \Vert \mathbbm{1}_{\{\Vert  \Delta_{{\bf 1}}G \Vert \neq 0 \} } > \frac{\epsilon}{C_1 e^{\lambda_1 (\sum t -k)}} \right)\\
&= C_2 k^{N-1}\p \left( \ln \Vert \Delta_{{\bf 1}}G \Vert \mathbbm{1}_{\{\Vert  \Delta_{{\bf 1}}G \Vert \neq 0 \} } > \ln \epsilon - \ln C_1 - \lambda_1\left(\sum t - k\right) \right),
\end{split}
\end{equation*}
where, when $k$ is large enough, 
$$\ln \epsilon - \ln C_1 - \lambda_1\sum t +\lambda_1 k = k\left(\frac{\ln \epsilon - \ln C_1 - \lambda_1\sum t}{k} + \lambda_1 \right) \geq C_3k $$
for some $C_3 > 0$. Hence, for $k$ large enough,
\begin{equation*}
\begin{split}
&\p \left( \sup_{\substack{\sum j = \sum t -k\\  j\leq t }} \Vert  e^{j \ast \Theta} \Delta_j G \Vert > \epsilon \right) \leq C_2 k^{N-1}\p \left( \ln \Vert \Delta_{{\bf 1}}G \Vert \mathbbm{1}_{\{\Vert  \Delta_{{\bf 1}}G \Vert \neq 0 \} } > C_3 k \right)\\
&= C_2 k^{N-1}\p \left( \ln \Vert \Delta_{{\bf 1}}G \Vert \mathbbm{1}_{\{\Vert  \Delta_{{\bf 1}}G \Vert > 1 \} } > C_3 k \right)\\
&\leq C_2 k^{N-1} \frac{\e \left (\ln \Vert \Delta_{{\bf 1}}G \Vert \mathbbm{1}_{\{\Vert  \Delta_{{\bf 1}}G \Vert > 1 \} } \right)^{N+\delta}}{(C_3k)^{N+\delta}} = \frac{C}{k^{1+\delta}}
\end{split}
\end{equation*}
by Markov's inequality and \eqref{logarithmic}. Then, Borel-Cantelli lemma gives
$$\sup_{\substack{\sum j = \sum t -k\\  j\leq t }} \Vert  e^{j \ast \Theta} \Delta_j G \Vert  \to 0$$ almost surely as $k\to\infty$,
and furthermore
\begin{equation*}
\label{bounded}
\sup_{k \geq 0} \sup_{\substack{\sum j = \sum t -k\\  j\leq t }} \Vert  e^{j \ast \Theta} \Delta_j G \Vert = \sup_{j\leq t} \Vert  e^{j \ast \Theta} \Delta_j G \Vert < \infty
\end{equation*}
almost surely, where $\Theta\in\poc$ is arbitrary.

Now we are in a position to show that \eqref{eq:glimitcondition} converges almost surely. We have that
\begin{equation}
\label{kerttu}
\begin{split}
\sum_{j=-M}^t \Vert e^{j \ast \Theta} \Delta_j G \Vert &\leq \sum_{j=-M}^t \Vert e^{\frac{1}{2}j \ast \Theta} \Delta_j G \Vert \Vert e^{\frac{1}{2}j \ast \Theta} \Vert \leq \sup_{j\leq t} \Vert  e^{\frac{1}{2}j \ast \Theta} \Delta_j G \Vert  \sum_{j=-M}^t \Vert e^{\frac{1}{2}j \ast \Theta} \Vert \\
&\leq C \sum_{j=-M}^t \Vert e^{\frac{1}{2}j \ast \Theta} \Vert
\end{split}
\end{equation}
almost surely. We write the above sum in $2^N$ parts as 
\begin{equation}
\label{saku}
\sum_{j=-M}^t \Vert e^{\frac{1}{2}j \ast \Theta} \Vert = \sum_{u\subseteq \{1,\dots,N\}} \sum_{j_u = -M_u}^{{\bf -1}} \sum_{j_{-u} = {{\bf 0}}}^{t_u} \Vert e^{\frac{1}{2}j \ast \Theta} \Vert,
\end{equation}
where $j_l$ runs from $-M_l$ to $-1$ for  $l\in u$ and from $0$ to $t_l$ for $l\notin u$. Now
\begin{equation*}
\begin{split}
\Vert e^{\frac{1}{2}j \ast \Theta} \Vert &= \Vert e^{\sum_{l\in u}\frac{1}{2} j_l \Theta_l+ \sum_{l\notin u}\frac{1}{2} j_l \Theta_l} \Vert \leq \Vert e^{\sum_{l\in u}\frac{1}{2} j_l \Theta_l} \Vert \Vert e^{\sum_{l\notin u}\frac{1}{2} j_l \Theta_l} \Vert \leq C \Vert e^{\sum_{l\in u}\frac{1}{2} j_l \Theta_l} \Vert\\
&\leq C \prod_{l\in u} \Vert e^{\frac{1}{2} j_l \Theta_l} \Vert = C\prod_{l\in u} e^{\frac{1}{2} j_l \lambda_1(\Theta_l)}
\end{split}
\end{equation*}
similarly as in the earlier part of the proof. By combining with \eqref{kerttu} and \eqref{saku},
\begin{equation*}
\sum_{j=-M}^t \Vert e^{j \ast \Theta} \Delta_j G \Vert \leq C \sum_{u\subseteq \{1,\dots,N\}} \sum_{j_{-u} = {{\bf 0}}}^{t_u}\sum_{j_u = -M_u}^{{\bf -1}} \prod_{l\in u}   e^{\frac{1}{2} j_l \lambda_1(\Theta_l)} 
\end{equation*}
almost surely. As $M \to \infty$,
$$\lim_{M\to\infty} \sum_{j=-M}^t \Vert e^{j \ast \Theta} \Delta_j G \Vert  \leq C\sum_{u\subseteq \{1,\dots,N\}}\sum_{j_{-u} = {{\bf 0}}}^{t_u} C_u,$$
where the remaining sums are over finite sets showing that the series is absolutely convergent almost surely and thus, completing the proof.
\end{proof}

\begin{proof}[Proof of Corollary \ref{kor:moment}]
Let $M>1$ be such that $(\ln x)^{N+\delta} < x$, when $x\geq M$. Then
\begin{equation*}
\begin{split}
&\e \left(\ln \Vert \Delta_{{\bf 1}} G \Vert \mathbbm{1}_{\{ \Vert \Delta_{{\bf 1}} G\Vert > 1 \}} \right)^{N+ \delta}\\ &= \e \left(\ln \Vert \Delta_{{\bf 1}} G \Vert \mathbbm{1}_{\{ \Vert \Delta_{{\bf 1}} G\Vert \geq M \}} \right)^{N+ \delta}+ \e \left(\ln \Vert \Delta_{{\bf 1}} G \Vert \mathbbm{1}_{\{ 1 <\Vert \Delta_{{\bf 1}} G\Vert < M \}} \right)^{N+ \delta}\\
&\leq \e \Vert \Delta_{{\bf 1}} G\Vert + (\ln M)^{N+\delta}
\end{split}
\end{equation*}
\end{proof}

\begin{proof}[Proof of Lemma \ref{lemma:selfsimilar}]
The unit cube increments of $Y$ are of the form
\begin{equation}
\label{nfold}
\Delta_t Y = \sum_{i\in\{0,1\}^N} (-1)^{\sum i} \sum_{j=-\infty}^{t-i} e^{j \ast \Theta} \Delta_jG =  \sum_{i\in\{0,1\}^N} (-1)^{\sum i} \sum_{j_1=-\infty}^{t_1-i_1} \dots \sum_{j_N=-\infty}^{t_N-i_N}e^{j \ast \Theta} \Delta_j G.
\end{equation}
Let us consider $e^{j\ast \Theta} \Delta_j G$, where $j_l = t_l$ for all $l\in M_+ \subset \{1,\dots, N\}$ with $|M_+| = m \neq N$. The term belongs to the $N$-fold sum of  \eqref{nfold} for every $i$ such that $i_l=0$ for all $l\in M_+$. That is, $m$ of the indices $i_l$ are zeros and remaining $N-m$ indices can be zeros or ones. Taking into account of the alternating sign in \eqref{nfold}, the total number of terms $e^{j\ast \Theta} \Delta_j G$ in \eqref{nfold} is
$$\binom{N-m}{0} - \binom{N-m}{1}+ \dots + \binom{N-m}{N-m} =0, \quad N-m\text{ is even}$$
and 
$$\binom{N-m}{0} - \binom{N-m}{1}+ \dots - \binom{N-m}{N-m} =0, \quad N-m\text{ is odd}$$
by Lemma \ref{binomial}. If $m=N$, then the corresponding term $e^{t\ast \Theta} \Delta_t G$ belongs only to the $N$-fold sum with $i= {\bf 0}$ completing the first part of the proof.\\
For the second part, we notice that
$$ \sum_{j_1=-M_1}^{t_1}\dots  \sum_{j_N=-M_N}^{t_N}  e^{j\ast \Theta} \Delta_j G =  \sum_{j_1=0}^{t_1+M_1}\dots  \sum_{j_N=0}^{t_N+M_N}  e^{(t-j)\ast \Theta} \Delta_{t-j} G = e^{t\ast \Theta} \sum_{j_1=0}^{t_1+M_1}\dots  \sum_{j_N=0}^{t_N+M_N}  e^{-j\ast \Theta} \Delta_{t-j} G.$$
Furthermore
$$e^{(t+s)\ast \Theta} \sum_{j_1=0}^{t_1+s_1+M_1}\dots  \sum_{j_N=0}^{t_N+s_N+M_N}  e^{-j\ast \Theta} \Delta_{t+s-j} G \yli e^{s\ast \Theta}e^{t\ast \Theta}\sum_{j_1=0}^{t_1+s_1+M_1}\dots  \sum_{j_N=0}^{t_N+s_N+M_N}  e^{-j\ast \Theta} \Delta_{t-j} G.$$
Since $G\in\mathcal{G}_\Theta$, this yields
\begin{equation*}
\begin{split}
Y_{e^{t+s}} &= \lim_{M_1\to\infty} \dots \lim_{M_N\to\infty}e^{(t+s)\ast \Theta} \sum_{j_1=0}^{t_1+s_1+M_1}\dots  \sum_{j_N=0}^{t_N+s_N+M_N}  e^{-j\ast \Theta} \Delta_{t+s-j} G\\
 &\yli \lim_{M_1\to\infty} \dots \lim_{M_N\to\infty} e^{s\ast \Theta}e^{t\ast \Theta}\sum_{j_1=0}^{t_1+s_1+M_1}\dots  \sum_{j_N=0}^{t_N+s_N+M_N}  e^{-j\ast \Theta} \Delta_{t-j} G\\
 &= e^{s\ast \Theta}  \lim_{\tilde{M}_1\to\infty} \dots \lim_{\tilde{M}_N\to\infty} e^{t\ast \Theta}\sum_{j_1=0}^{t_1+\tilde{M}_1}\dots  \sum_{j_N=0}^{t_N+\tilde{M}_N}  e^{-j\ast \Theta} \Delta_{t-j} G = e^{s\ast\Theta} Y_{e^t},
\end{split}
\end{equation*}
where $\tilde{M} = M+s$. Treating multidimensional distributions similarly completes the proof.
\end{proof}

\begin{proof}[Proof of Lemma \ref{lemma:stationaryincrements}]
First we show that the increments of $G$ are stationary under the assumption $\Theta \in \poc$. Let $s\in\zn$. Then
$$ \Delta_{t+s}G = e^{-(t+s)\ast \Theta} \Delta_{t+s}Y \overset{\text{law}}{=} e^{-(t+s)\ast \Theta}e^{s\ast \Theta} \Delta_t Y = e^{-t\ast \Theta}\Delta_t Y = \Delta_t G,$$
where we have used $\Theta$-self-similarity of $Y$ and the fact that the involved matrix exponents commute.
Multidimensional distributions of $\Delta G$ can be treated similarly.\\
Next, we consider the convergence of the $N$-fold sum. We have that
\begin{equation}
\label{Gsum}
\sum_{j_1 = -M_1+1}^{t_1} \dots \sum_{j_N = -M_N+1}^{t_N} e^{j\ast\Theta} \Delta_j G = \sum_{j_1 = -M_1+1} ^{t_1}\dots \sum_{j_N = -M_N+1}^{t_N} \Delta_t Y = \Delta_{-M,t} Y
\end{equation}
by Lemma \ref{lemma:incrementsum}. Furthermore
\begin{equation}
\label{incrementY}
\Delta_{-M,t} Y = \sum_{i\in\{0,1\}^N} (-1)^{\sum i} Y_{e^{t-i(t+M)}},
\end{equation}
where
\begin{equation}
\label{law}
 Y_{e^{t-i(t+M)}} \yli  e^{-i(t+M)\ast \Theta}Y_{e^{t}}
\end{equation}
and
\begin{equation}
\label{norms}
\Vert e^{-i(t+M) \ast \Theta} Y_{e^t}\Vert \leq \Vert e^{-i(t+M) \ast \Theta}\Vert \Vert  Y_{e^t}\Vert = \Vert e^{-\sum_{l=1}^N i_l(t_l+M_l) \Theta_l}\Vert \Vert  Y_{e^t}\Vert. 
\end{equation}
The matrices $i_l(t_l+M_l)\Theta_l$ are positive definite when $i_l=1$ and $M_l$ is large enough. Let $\lambda_1(A)$ denote the smallest eigenvalue of matrix $A$. As a direct consequence of Courant-Fischer min-max theorem
$$\lambda_1(A+B) \geq \max\{ \lambda_1(A), \lambda_1(B)\}$$
for any positive semidefinite matrices $A$ and $B$. Hence, we get
$$\lambda_1\left(\sum_{l=1}^N i_l(t_l+M_l) \Theta_l \right) \geq \max_l \{ \lambda_1(i_l(t_l+M_l)\Theta_l)\}.$$
Let $\sum_{l=1}^N i_l(t_l+M_l) \Theta_l = Q\Lambda Q^T$ be an eigendecomposition with eigenvalues $\lambda_1 \leq \dots \leq \lambda_n$. Then 
\begin{equation}
\label{tozero}
\begin{split}
\Vert e^{-\sum_{l=1}^N i_l(t_l+M_l) \Theta_l} \Vert  &= \Vert Q e^{-\Lambda} Q^T\Vert = \Vert \mathrm{diag}(e^{-\lambda_i}) \Vert= \max_i \{e^{-\lambda_i}\} = e^{-\lambda_1} \\ 
&\leq e^{-\max_l \{ \lambda_1(i_l(t_l+M_l)\Theta_l)\}} = e^{-\max_l \{ i_l(t_l+M_l)\lambda_1(\Theta_l)\}} \to 0,
\end{split}
\end{equation}
whenever $i\neq {\bf 0}$ and $M\to \infty$. Now, from \eqref{law}, \eqref{norms} and \eqref{tozero}, 
\begin{equation*}
\p \left(\Vert  Y_{e^{t-i(t+M)}} \Vert \geq \epsilon \right) = \p \left( \Vert e^{-i(t+M)\ast \Theta}Y_{e^{t}} \Vert \geq \epsilon \right) \to 0,
\end{equation*}
whenever $i \neq {\bf 0}$ and $M\to\infty$.
By combining this with \eqref{Gsum} and \eqref{incrementY}, we conclude that
\begin{equation*}
\begin{split}
\sum_{j=-M+{\bf 1}}^t e^{j\ast \Theta} \Delta_j G = \sum_{i\in\{0,1\}^N} (-1)^{\sum i} Y_{e^{t-i(t+M)}} \to Y_{e^t}
\end{split}
\end{equation*}
in probability as $M\to\infty$.
\end{proof}

\begin{proof}[Proof of Lemma \ref{lemma:increments}]
The property that $G_t = 0$ on the hyperplanes with $t_l=0$ for some $l$ follows from the lower bound $j_u=1$ of the summation in \eqref{G} resulting in an empty sum. Let $t\in\mathbb{Z}^N$ and consider a term $e^{-j\ast\Theta} \Delta_j Y$. We notice that it belongs to \eqref{G} if and only if $1 \leq j_l \leq t_l$ for all $l\in u$ and $t_l+1 \leq j_l \leq 0$ for all $l\in -u$. In what follows, we write $\{t-i \geq 0\} = \{l : t_l-i_l \geq 0 \}$. Now 
\begin{equation}
\label{deltaG}
\Delta_t G = \sum_{i\in \{0,1\}^N} (-1)^{\sum i} (-1)^{|\{t-i < 0\}|} \sum_{j_{\{t-i \geq 0\}} ={\bf 1}}^{(t-i)_{\{t-i \geq 0\}}} \sum_{{j_{\{t-i < 0\}} =(t-i)_{\{t-i < 0\}}+{\bf 1}}}^{\bf 0} e^{-j\ast\Theta} \Delta_j Y.
\end{equation}
By the previous observation, \eqref{deltaG} consists of terms $e^{-j\ast\Theta} \Delta_j Y$ for which $1\leq j_l \leq t_l$ for all $l$ such that $t_l \geq 1$, $j_l =0$ for all $l$ such that $t_l =0$ and $t_l \leq j_l \leq 0$ for all $l$ such that $t_l \leq -1$. Let $e^{-j\ast\Theta} \Delta_j Y$ be such term. Moreover, let $M_+, M_0$ and $M_-$ be the set of indices $l$ for which $t_l\geq 1, t_l=0$ and $t_l \leq -1$, respectively. If $e^{-j\ast\Theta} \Delta_j Y$ belongs to the $N$-fold sum in \eqref{deltaG} for $i\in\{0,1\}^N$, then the following observations hold true.
\begin{enumerate}[label=(\roman*)]
\item It holds that $i_{M_0} = 1$. That is, $|M_0|$ of the indices $i_l$ are ones. The corresponding contribution to the sign in \eqref{deltaG} is $(-1)^{|M_0|}(-1)^{|M_0|} =1$.
\item Let $m_+$ be the set of indices for which $j_l = t_l$ with $t_l\geq 1$. Now, it holds that $i_{m_+} = 0$. That is, $|m_+|$ of the indices are zeros having no contribution to the sign in \eqref{deltaG}. The indices $M_+-m_+$ can be either zeros or ones.
\item  Let $m_-$ be the set of indices for which $j_l = t_l$ with $t_l\leq -1$. Now, it holds that $i_{m_-} = 1$. That is, $|m_-|$ of the indices are ones. The contribution to the sign in \eqref{deltaG} is $(-1)^{|m_-|}(-1)^{|M_-|} = (-1)^{|m_-|+|M_-|}$. The indices $M_--m_-$ can be either zeros or ones.
\end{enumerate}
Now the total number of terms  $e^{-j\ast\Theta} \Delta_j Y$ in \eqref{deltaG} is 
\begin{equation}
\label{number}
\begin{split}
&(-1)^{|m_-|+|M_-|}\left[\binom{|M_+-m_+|+|M_--m_-|}{0}\right.\\
&\left.-\binom{|M_+-m_+|+|M_--m_-|}{1}
+\dots \pm \binom{|M_+-m_+|+|M_--m_-|}{|M_+-m_+|+|M_--m_-|}\right],
\end{split}
\end{equation}
where the sign of the last binomial coefficient depends on whether $|M_+-m_+|+|M_--m_-|$ is even or odd. Regardless, by Lemma \ref{binomial}, the result is zero unless $|M_+-m_+|+|M_--m_-|=0$. In that case $M_+ = m_+$ and $M_-= m_-$ meaning that $j_l=t_l$ for all $l$, i.e. $e^{-j\ast\Theta} \Delta_j Y=e^{-t\ast\Theta} \Delta_t Y$. Now \eqref{number} gives
$$(-1)^{|m_-|+|M_-|}\binom{0}{0}= (-1)^{2|M_-|} = 1.$$
\end{proof}

\begin{proof}[Proof of Theorem \ref{theorem:bijection}]
The first claim of the theorem is obtained by combining Lemmas \ref{lemma:selfsimilar}, \ref{lemma:stationaryincrements} and \ref{lemma:increments}. Let us consider $(\mathcal{M}_\Theta^{-1}\circ\mathcal{M}_\Theta) (Y)$ for a $\Theta$-self-similar $Y$. Then, by Lemmas \ref{lemma:incrementsum} and \ref{lemma:increments}
\begin{equation*}
\label{direction1}
\begin{split}
\mathcal{M}_\Theta^{-1}(\mathcal{M}_\Theta (Y))_t &= \lim_{M_1\to\infty} \dots \lim_{M_N\to\infty} \sum_{j=-M+{\bf 1}}^t e^{j\ast\Theta} \Delta_j (\mathcal{M}_\Theta(Y))\\
 &=\lim_{M_1\to\infty} \dots \lim_{M_N\to\infty} \sum_{j=-M+{\bf 1}}^t  \Delta_j Y = \lim_{M_1\to\infty} \dots \lim_{M_N\to\infty} \Delta_{-M,t} Y = Y_{e^t}\\
\end{split}
\end{equation*}
similarly as in the proof of Lemma \ref{lemma:stationaryincrements}.

Next, we consider $(\mathcal{M}_\Theta \circ \mathcal{M}_\Theta^{-1}) (G)$ for $G\in\mathcal{G}_{\Theta,0}$. By Lemma \ref{lemma:selfsimilar}, $Y_{e^t} = (\mathcal{M}_\Theta^{-1} G)_{e^t}$ is $\Theta$-self-similar with $\Delta_t Y = e^{t\ast \Theta} \Delta_t G$. On the other hand, $\tilde{G}_t = (\mathcal{M}_\Theta Y)_t$ is in $\mathcal{G}_{\Theta,0}$ with $\Delta_t \tilde{G} = e^{-t\ast\Theta} \Delta_t Y$. Since $e^{-t\ast\Theta}$ is the inverse of $e^{t\ast\Theta}$, we get
$$e^{t\ast\Theta} \Delta_t \tilde{G} = \Delta_t Y = e^{t\ast\Theta} \Delta_t G$$
and furthermore $ \Delta_t \tilde{G} = \Delta_t G$ for all $t\in\zn$. It remains to show that this implies $\tilde{G}_t = G_t$ for all $t\in\zn$. For this, let $t$ be such that $t_l\neq 0$ for all $l$. Then, by Remark \ref{rem:properties},
$$G_t = \Delta_{{\bf 0},t} G = (-1)^m \Delta_{\tilde{{\bf 0}}, \tilde{t}} G,$$
where $m$ is the number of elements for which $t_l <0$ and $\tilde{{\bf 0}}$ and $\tilde{t}$ are the vectors where such elements are swapped  so that $\tilde{t} \geq \tilde{{\bf 0}}$ element-wise. Therefore, by Lemma \ref{lemma:incrementsum},
$$G_t =  (-1)^m \Delta_{\tilde{{\bf 0}}, \tilde{t}} G =  (-1)^m\sum_{j= \tilde{{\bf 0}} +{\bf 1}}^{\tilde{t}} \Delta_j G =  (-1)^m\sum_{j= \tilde{{\bf 0}} +{\bf 1}}^{\tilde{t}} \Delta_j \tilde{G} = \tilde{G}_t.$$
\end{proof}

\begin{proof}[Proof of Theorem \ref{theorem:first}]
Set $Q_t^{(N)} = \Delta_t G$. Then, by Definition \ref{defi:hassu}
\begin{equation*}
\begin{split}
X_t &= \hat{\Theta}\star\hat{X}_t + \Delta_t G =  \sum_{\substack{i\in\{0,1\}^N\\ i\neq \bf{0}}} (-1)^{1+\sum i} e^{- i\ast\Theta} X_{t-i}+Q_t^{(N)}\\
&=  \sum_{\substack{i\in\{0,1\}^{N-1}}} (-1)^{\sum i} e^{- (i,1)\ast\Theta} X_{t-i,t_{N}-1} +\sum_{\substack{i\in\{0,1\}^{N-1}\\ i\neq \bf{0}}} (-1)^{1+\sum i} e^{- (i,0)\ast\Theta} X_{t-i,t_N}+Q_t^{(N)},
\end{split}
\end{equation*}
where e.g. $(i,1) = (i_1,\dots,i_{N-1},1)$ and $X_{t-i,t_N-1} = X_{t_1-i_1,\dots, t_{N-1}-i_{N-1}, t_N-1}$. From above we get
\begin{equation*}
\begin{split}
X_t + \sum_{\substack{i\in\{0,1\}^{N-1}\\ i\neq \bf{0}}} (-1)^{\sum i} e^{- (i,0)\ast\Theta} X_{t-i,t_N} &= \sum_{\substack{i\in\{0,1\}^{N-1}}} (-1)^{\sum i} e^{- (i,0)\ast\Theta} X_{t-i,t_N}\\
&=  \sum_{\substack{i\in\{0,1\}^{N-1}}} (-1)^{\sum i} e^{- (i,1)\ast\Theta} X_{t-i,t_{N}-1} +Q_t^{(N)}\\
&= e^{-\Theta_N} \sum_{\substack{i\in\{0,1\}^{N-1}}} (-1)^{\sum i} e^{- (i,0)\ast\Theta} X_{t-i,t_{N}-1} +Q_t^{(N)},
\end{split}
\end{equation*}
since the involved matrices commute. By denoting
$$Y_t^{(N)} (t_N) =  \sum_{\substack{i\in\{0,1\}^{N-1}}} (-1)^{\sum i} e^{- (i,0)\ast\Theta} X_{t-i,t_N} $$
we obtain
$$Y_t^{(N)} (t_N) = e^{-\Theta_N} Y_t^{(N)} (t_N-1) + Q_{t,t_N}^{(N)}. $$
By iterating the above equation, we get for every $n\in\mathbb{N}$ that
\begin{equation}
\begin{split}
\label{sumstays}
Y_t^{(N)} (t_N) &= e^{-(n+1)\Theta_N} Y_t^{(N)} (t_N-n-1) + \sum_{j_N = 0}^n e^{-j_N \Theta_N} Q_{t,t_N-j_N}^{(N)}\\
&=e^{-(n+1)\Theta_N} Y_t^{(N)} (t_N-n-1) + \sum_{k_N = t_N-n}^{t_N} e^{(k_N-t_N) \Theta_N} Q_{t,k_N}^{(N)},
\end{split}
\end{equation}
where
$$e^{-(n+1)\Theta_N} Y_t^{(N)} (t_N-n-1) = e^{-t_N \Theta_N} e^{(t_N-n-1)\Theta_N}  Y_t^{(N)} (t_N-n-1) =  e^{-t_N\Theta_N}e^{m\Theta_N}Y_t^{(N)} (m) $$
with $m = t_N-n-1\to -\infty$ as $n\to\infty$. By the condition (i) of Theorem \ref{theorem:first},

$$e^{m\Theta_N}Y_t^{(N)} (m) = e^{m\Theta_N} \sum_{\substack{i\in\{0,1\}^{N-1}}} (-1)^{\sum i} e^{- (i,0)\ast\Theta} X_{t-i,m} \to 0$$
in probability as $m\to -\infty$. Hence, as $n\to\infty$, \eqref{sumstays} yields
$$ Y_t^{(N)} (t_N) = e^{-t_N \Theta_N} \sum_{j_N = -\infty}^{t_N} e^{j_N \Theta_N} Q_{t,j_N}^{(N)}. $$
Thus, the induction assumption that for $k\in\mathbb{N}$,
\begin{equation}
\begin{split}
\label{inductionassumption}
 &\sum_{i\in\{0,1\}^{N-k}} (-1)^{\sum i} e^{- (i,{\bf 0})\ast\Theta} X_{t-i,t_{N-k+1},\dots,t_N}\\
  &= e^{-\sum_{l=1}^k t_{N-l+1}\Theta_{N-l+1}} \sum_{j_{N-k+1} =-\infty}^{t_{N-k+1}} \dots \sum_{j_N =-\infty}^{t_N}  e^{\sum_{l=1}^k j_{N-l+1}\Theta_{N-l+1}}Q_{t,j_{N-k+1},\dots,j_N}^{(N)} \eqqcolon Q_t^{(N-k)}
\end{split}
\end{equation}
holds true when $k=1$. Note that above $(i,{\bf 0}) = (i_1,\dots,i_{N-k},0, \dots, 0)$. This gives
\begin{equation*}
\begin{split}
&\sum_{i\in\{0,1\}^{N-k-1}} (-1)^{\sum i} e^{- (i,{\bf 0})\ast\Theta} X_{t-i,t_{N-k},\dots,t_N}\\
&= e^{-\Theta_{N-k}} \sum_{i\in\{0,1\}^{N-k-1}} (-1)^{\sum i} e^{- (i,{\bf 0})\ast\Theta} X_{t-i,t_{N-k}-1, t_{N-k+1},\dots,t_N}+Q_t^{(N-k)}.
\end{split}
\end{equation*}
Thus, by setting
$$Y_t^{(N-k)} (t_{N-k}) =\sum_{i\in\{0,1\}^{N-k-1}} (-1)^{\sum i} e^{- (i,{\bf 0})\ast\Theta} X_{t-i,t_{N-k},\dots,t_N} $$
we obtain that
$$ Y_t^{(N-k)} (t_{N-k}) = e^{-\Theta_{N-k}}Y_t^{(N-k)} (t_{N-k}-1) +Q_{t_1,\dots,t_{N-k},\dots,t_N}^{(N-k)}.$$
Again by iteration, it holds for every $n\in\mathbb{N}$ that
\begin{equation*}
\begin{split}
Y_t^{(N-k)} (t_{N-k}) &= e^{-(n+1)\Theta_{N-k}}Y_t^{(N-k)} (t_{N-k}-n-1)\\
 &+\sum_{j_{N-k} = 0}^n e^{-j_{N-k} \Theta_{N-k}} Q_{t_1,\dots,t_{N-k-1},t_{N-k}-j_{N-k},t_{N-k+1},\dots,t_N}^{(N-k)},
\end{split}
\end{equation*}
where similarly as before
$$e^{-(n+1)\Theta_{N-k}}Y_t^{(N-k)} (t_{N-k}-n-1) \overset{\p}{\longrightarrow} 0\quad\text{as } n\to\infty. $$
Hence
\begin{equation*}
\begin{split}
Y_t^{(N-k)} (t_{N-k}) &= \sum_{j_{N-k} = 0}^\infty e^{-j_{N-k} \Theta_{N-k}} Q_{t_1,\dots,t_{N-k-1},t_{N-k}-j_{N-k},t_{N-k+1},\dots,t_N}^{(N-k)}\\
&= e^{-t_{N-k} \Theta_{N-k}} \sum_{j_{N-k} = -\infty}^{t_{N-k}}e^{j_{N-k} \Theta_{N-k}} Q_{t_1,\dots,t_{N-k-1},j_{N-k},t_{N-k+1},\dots,t_N}^{(N-k)}.
\end{split}
\end{equation*}
By the definitions of $Y_t^{(N-k)}$ and $Q_t^{(N-k)}$ this gives directly that
\begin{equation*}
\begin{split}
&\sum_{i\in\{0,1\}^{N-k-1}} (-1)^{\sum i} e^{- (i,{\bf 0})\ast\Theta} X_{t-i,t_{N-k},\dots,t_N}\\
&=  e^{-\sum_{l=1}^{k+1} t_{N-l+1}\Theta_{N-l+1}} \sum_{j_{N-k} =-\infty}^{t_{N-k}} \dots \sum_{j_N =-\infty}^{t_N}  e^{\sum_{l=1}^{k+1} j_{N-l+1}\Theta_{N-l+1}}Q_{t,j_{N-k},\dots,j_N}^{(N)},
\end{split}
\end{equation*}
which completes the induction step. Now, by choosing $k=N$ in \eqref{inductionassumption} we get
\begin{equation*}
\begin{split}
X_t &=  e^{-\sum_{l=1}^{N} t_{N-l+1}\Theta_{N-l+1}} \sum_{j =-\infty}^{t}  e^{\sum_{l=1}^{N} j_{N-l+1}\Theta_{N-l+1}}Q_{j}^{(N)}\\ 
&=   e^{- t\ast\Theta} \sum_{j =-\infty}^{t}  e^{j\ast \Theta}\Delta_{j}G = (\mathcal{L}_\Theta^{-1} ( \mathcal{M}_\Theta^{-1}G))_t
\end{split}
\end{equation*}
and thus, $X$ is stationary.
\end{proof}

We utilize the present value and the corresponding increment in order to define the notion of ''previous value'' of a field. 
\begin{defi}
Let $X = (X_t)_{t\in \zn}$ and $t\in\zn$. We call the random vector
$$X_t^-  \coloneqq X_t - \Delta_t X = \sum_{\substack{(i_1,\ldots,i_N)\in\{0,1\}^N\\ (i_1,\ldots,i_N)\neq \bf{0}}} (-1)^{1+\sum_{l=1}^N i_l} X_{t_1-i_1,\dots,t_N-i_N}.$$
as the ''previous value'' of $X$ at $t$.
\end{defi}

\begin{proof}[Proof of Theorem \ref{theorem:second}]
Let $Y = \mathcal{L}_\Theta X$. Then
\begin{equation*}
\begin{split}
\Delta_t X = X_t -X_t^- &= e^{-t \ast \Theta} Y_{e^t} - X_t^- -e^{-t \ast \Theta}\Delta_t Y +e^{-t \ast \Theta}\Delta_t Y \\
&=  e^{-t \ast \Theta}(Y_{e^t} - \Delta_t Y) - X_t^- +e^{-t \ast \Theta}\Delta_t Y
\end{split}
\end{equation*}
giving
\begin{equation}
\label{mainii}
\Delta_t X + X_t^- = X_t =  e^{-t \ast \Theta}(Y_{e^t} - \Delta_t Y)+ e^{-t \ast \Theta}\Delta_t Y.
\end{equation}
Define $G = \mathcal{M}_\Theta(Y) = (\mathcal{M}_\Theta \circ  \mathcal{L}_\Theta) (X)$. Then $\Delta_t G = e^{-t \ast \Theta}\Delta_t Y$. In addition, since $\Theta \in \poc$,
\begin{equation*}
\begin{split}
e^{-t \ast \Theta}(Y_{e^t} - \Delta_t Y) &= e^{-t \ast \Theta} \sum_{\substack{i\in\{0,1\}^N\\ i\neq \bf{0}}} (-1)^{1+\sum i}Y_{e^{t-i}}\\
 &= \sum_{\substack{i\in\{0,1\}^N\\ i\neq \bf{0}}} (-1)^{1+\sum i} e^{-i \ast \Theta}e^{-(t-i) \ast \Theta}Y_{e^{t-i}}\\
 &= \sum_{\substack{i\in\{0,1\}^N\\ i\neq \bf{0}}} (-1)^{1+\sum i} e^{-i \ast \Theta}X_{t-i} =  \hat{\Theta}\star\hat{X}_t.
\end{split}
\end{equation*}
From \eqref{mainii} we conclude that
$$X_t = \hat{\Theta}\star\hat{X}_t + \Delta_t G$$
with $G= (\mathcal{M}_\Theta \circ  \mathcal{L}_\Theta) (X)$.
\end{proof}

\begin{proof}[Proof of Theorem \ref{theorem:main}]
By Theorems \ref{theorem:first} and \ref{theorem:second}, it remains to prove uniqueness of $G$ in \ref{theorem:second}. Let $X$ be stationary and $G, \tilde{G}\in\mathcal{G}_{\Theta,0}$ satisfy \eqref{ar1}. Then, by Theorem \ref{theorem:first},
$$ (\mathcal{L}_\Theta^{-1} \circ \mathcal{M}_\Theta^{-1})(G) = X = (\mathcal{L}_\Theta^{-1} \circ \mathcal{M}_\Theta^{-1})(\tilde{G})$$
giving $$ G = (\mathcal{M}_\Theta \circ \mathcal{L}_\Theta)(X) = \tilde{G}.$$
\end{proof}

\bibliographystyle{plain}
\bibliography{biblio}


\end{document}